\newtheorem{theorem}{Theorem}[section]
\newtheorem{lemma}{Lemma}[section]
\begin{document}

\title{Canonical bases of the modified quantized enveloping algebras of type $A_{2}$}
\author{Weideng Cui \\
 {\small Mathematical Sciences Center, Tsinghua University, Jin Chun Yuan West Building,}\\
{\small Beijing, 100084, P. R. China. E-mail: cwdeng@amss.ac.cn
 }}
\date{}
\maketitle
 \begin{abstract}The modified quantized enveloping algebra $\dot{\mathbf{U}}$ has a remarkable canonical basis, which was introduced by Lusztig. In this paper, we give an explicit description of all elements of
 the canonical basis of $\dot{\mathbf{U}}$ for type $A_{2}$.\end{abstract}

\thanks{{Keywords}: The modified quantized enveloping algebra; Canonical basis; The highest or lowest weight module; The
quasi-R-matrix} \large

\section{Introduction}
\medskip
The canonical basis of a quantized enveloping algebra was introduced by
Lusztig first for type ADE in [L1] and then for other types (see
[L3, L5]). Kashiwara constructed the crystal basis and the global crystal
basis for the quantized enveloping algebra associated to an
arbitrary symmetrizable generalized Cartan matrix in [K1]. The
canonical basis and the global crystal basis of a quantized
enveloping algebra were proved to be the same by Lusztig for type
ADE in [L2] and by Grojnowski-Lusztig for symmetric generalized
Cartan matrices in [GL]. The canonical basis and the crystal basis have many
remarkable properties.

However it is hard to compute the canonical basis. By now, the basis
is only computed for type $A_{1}$, $A_2$, $A_3$, $B_2$ (see
[L1, X1, X2]). For type $A_4$, part of the basis is computed in
[HYY, HY, LH].

A modified form of the quantized enveloping algebra was introduced
by Lusztig and a remarkable basis was also given in [L4, L5], which
was called the canonical basis. It is even harder to compute the basis.
By now, the basis is only computed for type $A_{1}$ in Lusztig's
book [L5], which consists of two classes of monomial elements (see [L5, Prop 25.3.2]).

In this article, we try to compute the canonical basis of the
modified quantized enveloping algebra of type $A_2$. We give a full
list of all elements of the canonical basis in Theorem 3.1.

The contents of the article are as follows. We will consider the
quantized enveloping algebra of type $A_2$ and its modified form, we
denote them by $\mathbf{U},\dot{\mathbf{U}}$ respectively.

In Section 2, we recall some facts about $\mathbf{U}, \dot{\mathbf{U}}$
and the canonical basis $\dot{\mathbf{B}}$ of $\dot{\mathbf{U}}$ ,
the basic references are [L1, L5]. In Section 3, we give a description of all elements of the canonical
basis $\dot{\mathbf{B}}$ in Theorem 3.1. In Section 4, we first establish some combinatorial identities in Lemma 4.1, and then give the proof of Theorem 3.1.

We hope that the results of this article will be helpful to
understand the canonical basis of the modified quantized enveloping
algebra.

\section{Preliminaries}
\subsection{The modified quantized enveloping algebras of type $A_{2}$}We will need some notations.
Let $a$ be an integer and $b$ a positive number. Set \\
$$[a]=\frac{v^{a}-v^{-a}}{v-v^{-1}},~~~[b]!=\prod_{h=1}^{b}\frac{v^{h}-v^{-h}}{v-v^{-1}},~~~[0]!=1;$$
$$[-b]!=(-1)^{b}[b]!,~~~\left[\begin{array}{c}a \\
b
\end{array}\right]=\prod_{h=1}^{b}\frac{v^{a-h+1}-v^{-(a-h+1)}}{v^{h}-v^{-h}};$$
$$\left[\begin{array}{c}a \\
0
\end{array}\right]=1,~~~ \left[\begin{array}{c}a \\
-b
\end{array}\right]=0.$$

We have\\$$\left[\begin{array}{c}a \\
t
\end{array}\right]=0 ~~\mbox{if}~ 0\leq a< t,~~~\left[\begin{array}{c}a \\
t
\end{array}\right]=(-1)^{t}\left[\begin{array}{c}-a+t-1 \\
t
\end{array}\right];$$

$$\left[\begin{array}{c}a+b \\
b
\end{array}\right]=\frac{[a+b]!}{[a]![b]!} \   \hbox {  for } a,b\in \mathbb{N}.$$

$\mathbf{U}=U_{v}(sl_{3})$ is the associative algebra over
$\mathbb{Q}(v)$ ($v$ an indeterminate) with the generators
$e_{i},f_{i},k_{i}^{\pm 1}$ $(1\leq i \leq 2)$ subject to the following
relations:
\begin{equation*}k_{i}k_{j}=k_{j}k_{i}, ~~k_{i}k_{i}^{-1}=1,\
~~\forall 1\leq i,j \leq 2;\end{equation*}
\begin{equation*}k_{i}e_{j}k_{i}^{-1}=v^{\langle \alpha_{i}^{\vee},\alpha_{j}\rangle}e_{j},~~~k_{i}f_{j}k_{i}^{-1}=v^{-\langle\alpha_{i}^{\vee},\alpha_{j}\rangle}f_{j},
\ ~~\forall 1\leq i,j\leq 2;\end{equation*}
\begin{equation*}e_{i}f_{j}-f_{j}e_{i}=\delta_{i,j}\frac{k_{i}-k_{i}^{-1}}{v-v^{-1}};\end{equation*}
\begin{equation*}\sum\limits_{r+s=2}(-1)^{r}e_{i}^{(r)}e_{j}e_{i}^{(s)}=0,\  ~~\hbox{for }i\neq j;\end{equation*}
\begin{equation*}\sum\limits_{r+s=2}(-1)^{r}f_{i}^{(r)}f_{j}f_{i}^{(s)}=0,\ ~~ \hbox{for } i\neq j.\end{equation*}
where $e_{i}^{(r)}$ denotes the divided power
$\frac{e_{i}^{r}}{[r]!}$, and $k_{i}=k_{\alpha_{i}^{\vee}},$
$\alpha_{i}^{\vee}$ is the coroot corresponding to the simple root
$\alpha_{i}.$ We denote by $X,Y$ the weight lattice and coroot lattice
respectively, $\langle\cdot, \cdot\rangle:Y\times X\rightarrow
\mathbb{Z}$ is the perfect bilinear pairing. Let
$a_{ij}=\langle\alpha_{i}^{\vee},\alpha_{j}\rangle,$ then $(a_{ij})$
is the corresponding Cartan matrix.

There is a unique isomorphism of $\mathbb{Q}(v)$-vector spaces
$\sigma:\mathbf{U}\rightarrow
  \mathbf{U}$
such that
$$\sigma(e_{i})=e_{i},\sigma(f_{i})=f_{i},\sigma(k_{i})=k_{i}^{-1}
~\mbox{for}~ 1\leq i\leq 2,~~ \sigma(uu')=\sigma(u')\sigma(u)~ \mbox{for}~
u,u'\in \mathbf{U}.$$

Let
$-:\mathbf{U}\to \mathbf{U}$ be the bar involution of $\mathbf{U}$,
which is a ring homomorphism of $\mathbf{U}$ defined by
$$\bar e_i=e_i,~~~~ \bar f_i=f_i, ~~~~\bar k_i=k_i^{-1}~~ \mbox{for}~ 1\leq i \leq 2, ~~~~ \bar v=v^{-1}.$$
It is well-known that $\mathbf{U}$ is a Hopf algebra and the comultiplication
is defined as follows:
$$\Delta(e_{i})=e_{i}\otimes 1+k_{i}\otimes
e_{i},~~~\Delta(f_{i})=f_{i}\otimes k_{i}^{-1}+1\otimes
f_{i},~~~\Delta(k_{i})=k_{i}\otimes k_{i}~~ \mbox{for}~ 1\leq i \leq 2.$$

We define a category $\mathcal{C}$ as follows. An object of
   $\mathcal{C}$ is a $\mathbf{U}$-module $M$ with a given direct
   sum decomposition $M=\bigoplus_{\lambda\in X}M_{\lambda}$(as a $\mathbb{Q}(v)$-vector space)
   such that for any $\lambda\in X$ and $m\in
   M_{\lambda},$ we have $k_{i}m=v^{<\alpha_{i}^{\vee},\lambda>}m.$ The
   subspaces $M_{\lambda}$ are called the weight spaces of $M.$ A
   morphism in $\mathcal{C}$ is a $\mathbf{U}$-linear map.

If $M',M'' \in \mathcal{C},$ the tensor product $M'\otimes M''$ is
naturally a $\mathbf{U}\otimes \mathbf{U}$-module with $(u'\otimes
u'')(m'\otimes m'')=u'm'\otimes u''m''.$ We restrict it to a
$\mathbf{U}$-module via the algebra homomorphism
$\Delta:\mathbf{U}\rightarrow\mathbf{U}\otimes \mathbf{U}$. The
resulting $\mathbf{U}$-module is naturally an object of
$\mathcal{C}$. If $m'\in M'_{\lambda^{'}},m''\in
M''_{\lambda^{''}},$ we have the following identities:

\vskip3mm (a)\hspace*{0.5cm} $e_{i}^{(a)}(m'\otimes
m'')=\sum\limits_{a'+a''=a}v^{a'a''+a''<\alpha_{i}^{\vee},\lambda'>}e_{i}^{(a')}m'\otimes
e_{i}^{(a'')}m'';$

\vskip3mm (b) $\hspace*{0.5cm}f_{i}^{(a)}(m'\otimes
m'')=\sum\limits_{a'+a''=a}v^{a'a''-a'<\alpha_{i}^{\vee},\lambda''>}f_{i}^{(a')}m'\otimes
f_{i}^{(a'')}m''.$

\vskip3mm
Let $\mathbf{U}^{+}$ be the $\mathbb{Q}(v)$-subalgebra of
$\mathbf{U}$ generated by all the $e_{i}$ $(1\le i\le 2),$ and
let $\mathbf{U}^{-}$ be the $\mathbb{Q}(v)$-subalgebra of $\mathbf{U}$
generated by all the $f_{i}$ $(1\le i\le 2)$. It is
well-known that if $\mathbf{f}$ is the associative
$\mathbb{Q}(v)$-algebra generated by $\theta_{i}$ $(1\leq i \leq 2)$ subject to the following relation:
$$\sum\limits_{r+s=2}(-1)^{r}\theta_{i}^{(r)}\theta_{j}\theta_{i}^{(s)}=0,
\ ~~\hbox {for }i\neq j.$$
There is a unique algebra isomorphism $+:\mathbf{f}\rightarrow \mathbf{U}^{+}; b\mapsto b^{+}$ (resp. $-:\mathbf{f}\rightarrow \mathbf{U}^{-}; b\mapsto b^{-}$) such that $\theta_{i}^{+}=e_{i}$ (resp. $\theta_{i}^{-}=f_{i}$). Let
$\mathcal{A}=\mathbb{Z}[v,v^{-1}]$, and let
$\mathbf{f}_{\mathcal{A}}$ be the $\mathcal{A}$-subalgebra of
$\mathbf{f}$ generated by the elements $\theta_{i}^{(s)}$ for $1\leq i
\leq 2$ and $s\in \mathbb{N}$.

Let $I=\{1,2\},$ and let $\mathbb{N}[I]$ be the monoid consisting of all
linear combinations of elements of $I$ with coefficients in
$\mathbb{N}.$ For any $\nu=\sum_{i}\nu_{i}i\in \mathbb{N}[I],$ we
denote by $\mathbf{f}_{\nu}$ the $\mathbb{Q}(v)$-subspace of
$\mathbf{f}$ spanned by the monomials $\theta_{i_1}\cdots
\theta_{i_{r}}$ such that for any $i\in I,$ the number of
occurrences of $i$ in the sequence $i_1,\ldots, i_r$ is equal to
$\nu_{i}.$ Then each $\mathbf{f}_{\nu}$ is a finite dimensional
$\mathbb{Q}(v)$-vector space and we have a direct sum decomposition
$\mathbf{f}=\bigoplus_{\nu\in \mathbb{N}[I]}\mathbf{f}_{\nu}.$ We have
$\mathbf{f}_{\nu}\mathbf{f}_{\nu'}\subset\mathbf{f}_{\nu+\nu'}, 1\in
\mathbf{f}_{0}$ and $\theta_{i}\in \mathbf{f}_{i}.$ By the above-mentioned algebra
isomorphisms, we also have
$\mathbf{U}^{-}=\bigoplus_{\nu\in \mathbb{N}[I]}U^-_{-\nu}$ and
$\mathbf{U}^{+}=\bigoplus_{\nu\in \mathbb{N}[I]}U^+_{\nu}.$ Any
element $x$ of $\mathbf{f}$ is said to be homogeneous if it belongs
to $\mathbf{f}_{\nu}$ for some $\nu.$ We then set $|x|=\nu.$ If
$\nu=\sum_{i}\nu_{i}i\in \mathbb{N}[I],$ then we set
$tr\hspace*{0.1cm} \nu=\sum_{i}\nu_{i}.$

For an integer $c$ and a positive integer $a$ we set
$$\left[\begin{array}{c}k_{i};c \\ a \end{array}\right]
=\prod\limits_{h=1}^{a}\frac{k_{i}v^{c-h+1}-k_{i}^{-1}v^{-(c-h+1)}}{v^{h}-v^{-h}}
  ,  ~~\left[\begin{array}{c}k_{i};c \\ 0\end{array}\right]=1.$$
  We have the following formulas (see [L5]):

\vskip3mm (c) ${\displaystyle
e_{i}^{(a)}f_{i}^{(b)}=\sum\limits_{0\leq t\leq a,b}f_{i}^{(b-t)}
\left[\begin{array}{c}k_{i};2t-a-b \\ t
\end{array}\right]e_{i}^{(a-t)};}$

\vskip3mm

(d) $e_{i}^{(a)}f_{j}^{(b)}=f_{j}^{(b)}e_{i}^{(a)} ~~\hbox{if} \
i\neq j;$

\vskip3mm (e) $ \left[\begin{array}{c}k_{i};c
\\ a
\end{array}\right]\hspace*{-0.1cm}e_j^{(b)}=e_j^{(b)}\hspace*{-0.1cm}\left[\begin{array}{c}k_{i};c+ba_{ij} \\ a
\end{array}\right]\hspace*{-0.1cm},$~~~
${\displaystyle \left[\begin{array}{c}k_{i};c \\ a
\end{array}\right]\hspace*{-0.1cm}f_j^{(b)}=f_j^{(b)}\hspace*{-0.1cm}\left[\begin{array}{c}k_{i};c-ba_{ij} \\ a
\end{array}\right].}$\\

\vskip2mm Next let us recall the definition of the modified
quantized enveloping algebra $\dot{\mathbf{U}}$, which is the
modified form of $\mathbf{U}$.

 If $\lambda^{'},\lambda^{''}\in X,$ we set
$${}_{\lambda^{'}}\!\mathbf{U}_{\lambda^{''}}=\mathbf{U}/(\sum\limits_{\mu\in Y}(k_{\mu}-v^{<\mu,\lambda^{'}>})\mathbf{U}+
\sum\limits_{\mu\in Y}\mathbf{U}(k_{\mu}-v^{<\mu,\lambda^{''}>})).$$

Let $\pi_{\lambda',  \lambda''}:\mathbf{U}\rightarrow
 {}_{\lambda^{'}}\!\mathbf{U}_{\lambda^{''}}$ be the canonical projection. Then we
define $$\dot{\mathbf{U}}=\bigoplus_{\lambda^{'}, \lambda^{''}\in
X}{}_{\lambda^{'}}\!\mathbf{U}_{\lambda^{''}}.$$

There is a natural associative $\mathbb{Q}(v)$-algebra structure on
$\dot{\mathbf{U}}$ inherited from that of $\mathbf{U}.$ The elements
$1_{\lambda}=\pi_{\lambda,\lambda}(1)$ $(\lambda \in X)$ of
$\dot{\mathbf{U}}$ satisfy
$$1_{\lambda}1_{\lambda^{'}}=\delta_{\lambda,\lambda^{'}}1_{\lambda}.$$

Then we have
$${}_{\lambda^{'}}\!\mathbf{U}_{\lambda^{''}}=1_{\lambda^{'}}\dot{\mathbf{U}}1_{\lambda^{''}}.$$

The algebra $\dot{\mathbf{U}}$ does not generally have $1$, but
instead a collection of orthogonal idempotents.

We have the following identities in $\dot{\mathbf{U}}$:
$$e_{i}^{(a)}1_{\lambda}=1_{\lambda+a\alpha_{i}}e_{i}^{(a)},~~
f_{i}^{(b)}1_{\lambda}=1_{\lambda-b\alpha_{i}}f_{i}^{(b)}\hbox{ for
}
 1\leq i \leq 2,~~\lambda \in X,~~a,b\in \mathbb{N}.$$

If $\omega_{1},\omega_{2}$ are the fundamental weights in $X$, then
any element of $X$ can be written as
$\lambda=\lambda_{1}\omega_{1}+\lambda_{2}\omega_{2}$, for $\lambda_{1},\lambda_{2}\in
\mathbb{Z}.$ We will sometimes denote this element $\lambda$ by
$(\lambda_{1},\lambda_{2})$ for simplicity. So the above
identities can be written as follows:

\vskip3mm
(f)\hspace*{0.2cm}$e_{1}^{(a)}1_{(\lambda_{1},\lambda_{2})}=1_{(\lambda_{1}+2a,\lambda_{2}-a)}e_{1}^{(a)},~~~
e_{2}^{(a)}1_{(\lambda_{1},\lambda_{2})}=1_{(\lambda_{1}-a,\lambda_{2}+2a)}e_{2}^{(a)};$

\vskip3mm
(g)\hspace*{0.2cm}$f_{1}^{(b)}1_{(\lambda_{1},\lambda_{2})}=1_{(\lambda_{1}-2b,\lambda_{2}+b)}f_{1}^{(b)},~~~
f_{2}^{(b)}1_{(\lambda_{1},\lambda_{2})}=1_{(\lambda_{1}+b,\lambda_{2}-2b)}f_{2}^{(b)}.$

\vskip3mm The $\mathbb{Q}$-algebra homomorphism
$-:\mathbf{U}\rightarrow
  \mathbf{U}$ induces, for any $\lambda,\lambda'\in X,$ a
  $\mathbb{Q}$-linear map $-:{}_{\lambda^{'}}\!\mathbf{U}_{\lambda^{''}}\rightarrow {}_{\lambda^{'}}\!\mathbf{U}_{\lambda^{''}}.$
Taking the direct sum of these maps, we obtain a $\mathbb{Q}$-linear
map $-:\dot{\mathbf{U}}\rightarrow
  \dot{\mathbf{U}}$ with square $1,$ which respects the
  multiplication of $\dot{\mathbf{U}},$ maps each $1_{\lambda}$ into
  itself and satisfies $\overline{tst'}=\bar{t}\bar{s}\bar{t'}$ for
  $t,t'\in \mathbf{U}$
  and $s\in \dot{\mathbf{U}}.$

The map $\sigma:\mathbf{U}\rightarrow
  \mathbf{U}$ induces, for each $\lambda',\lambda'',$ a linear
  isomorphism ${}_{\lambda^{'}}\!\mathbf{U}_{\lambda^{''}}\rightarrow
  {}_{-\lambda^{''}}\!\mathbf{U}_{-\lambda^{'}}.$ Taking direct sums, we obtain a linear
  isomorphism $\sigma:\dot{\mathbf{U}}\rightarrow
  \dot{\mathbf{U}}$ such that $\sigma(1_{\lambda})=1_{-\lambda}$ for
  all $\lambda \in X,$ and
  $\sigma(uxx'u')=\sigma(u')\sigma(x')\sigma(x)\sigma(u)$ for all $u,u'\in
  \mathbf{U}$ and $x,x'\in \dot{\mathbf{U}}.$

The $\mathcal{A}$-submodule of $\dot{\mathbf{U}}$ spanned by the
elements $x^{+}1_{\lambda}x'^{-}$(with $x,x'\in
\mathbf{f}_{\mathcal{A}},\lambda\in X)$ coincides with the
$\mathcal{A}$-submodule of $\dot{\mathbf{U}}$ spanned by the
elements $x^{-}1_{\lambda}x'^{+}$(with $x,x'\in
\mathbf{f}_{\mathcal{A}},\lambda\in X),$ we denote it by
$\dot{\mathbf{U}}_{\mathcal{A}}$

\subsection{Canonical bases of modified quantized enveloping algebras of type $A_{2}$}

Assume that $V(a\omega_{1}+b\omega_{2})$ is a finite dimensional simple
highest weight $\mathbf{U}$-module with the highest weight vector
$\eta_{(a,b)}$ $(a, b\in \mathbb{N})$, and that $V(-s\omega_{1}-t\omega_{2})$ is a finite dimensional simple lowest weight $\mathbf{U}$-module with the
lowest weight vector $\xi_{(-s,-t)}$ $(s, t\in \mathbb{N})$. $V(a\omega_{1}+b\omega_{2})$ and $V(-s\omega_{1}-t\omega_{2})$ are
both objects of $\mathcal{C}$.

As is well known, the canonical basis $\mathbf{B}$ of $\mathbf{f}$
is given by Lusztig in [L1] as follows:
$$\theta_{2}^{(u)}\theta_{1}^{(v)}\theta_{2}^{(w)},\  \theta_{1}^{(u)}\theta_{2}^{(v)}\theta_{1}^{(w)},~~~\mbox{if}~v\geq u+w ~\mbox {and} ~u,v,w \in \mathbb{N}.$$
where
$\theta_{2}^{(u)}\theta_{1}^{(u+w)}\theta_{2}^{(w)}=\theta_{1}^{(w)}\theta_{2}^{(u+w)}\theta_{1}^{(u)}$
are considered only once.

In general, given $\lambda\in X^{+},$ we define
$\mathbf{B}(\lambda)$ as in [L5, Theorem 14.4.11], and the map
$b\mapsto b^{-}\eta_{\lambda}$ defines a bijection of
$\mathbf{B}(\lambda)$ onto a basis of the highest weight
$\mathbf{U}$-module $V(\lambda)$ with the highest weight vector
$\eta_{\lambda};$ $b^{-}\eta_{\lambda}=0$ if $b\notin
\mathbf{B}(\lambda).$ Making use of [L5, Theorem
14.4.11], we have the following lemma.

\begin{lemma}If $\lambda=a\omega_{1}+b\omega_2, a,b\in\mathbb{N},$
we can get that $\mathbf{B}(a\omega_{1}+b\omega_{2})$ consists of
the following elements:
$$\theta_{2}^{(u)}\theta_{1}^{(v)}\theta_{2}^{(w)},\ \hbox{ where }0\leq w \leq b,~~0\leq u \leq a ,~~u+w\leq v \leq a+w; \eqno{(1)}$$
$$\theta_{1}^{(s)}\theta_{2}^{(t)}\theta_{1}^{(r)},\ \hbox{where }0\leq s \leq b-1,~~0\leq r\leq a, ~~s+1+r\leq t \leq b+r. \eqno{(2)}$$

If $b=0$, we only consider these elements listed in $(1)$.\end{lemma}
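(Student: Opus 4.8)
The goal is to determine $\mathbf{B}(a\omega_1+b\omega_2)$ explicitly, that is, to identify precisely which canonical basis elements $b \in \mathbf{B}$ of $\mathbf{f}$ satisfy $b^-\eta_\lambda \neq 0$, and this is exactly the content of [L5, Theorem 14.4.11]. Recall from the excerpt that $\mathbf{B}$ consists of the two families $\theta_2^{(u)}\theta_1^{(v)}\theta_2^{(w)}$ and $\theta_1^{(u)}\theta_2^{(v)}\theta_1^{(w)}$ subject to $v \geq u+w$ (with the overlap $v=u+w$ counted once). By [L5, Theorem 14.4.11], the set $\mathbf{B}(\lambda)$ is cut out by a condition on the crystal/string data: $b^-\eta_\lambda \neq 0$ if and only if certain tightening inequalities involving the pairings $\langle \alpha_i^\vee,\lambda\rangle$ hold. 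For type $A_2$ we have $\langle \alpha_1^\vee,\lambda\rangle = a$ and $\langle \alpha_2^\vee,\lambda\rangle = b$, so the plan is to translate Lusztig's abstract vanishing criterion into the concrete numerical inequalities displayed in $(1)$ and $(2)$.

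The approach I would take is to work directly with the characterization in [L5, Theorem 14.4.11] that $\mathbf{B}(\lambda) = \{\, b \in \mathbf{B} : b^-\eta_\lambda \neq 0 \,\}$, and to use the known description (via Kashiwara operators or the string parametrization) of when a PBW-type canonical basis monomial acts nontrivially on the highest weight vector. Concretely, for a monomial $\theta_2^{(u)}\theta_1^{(v)}\theta_2^{(w)}$ I would apply the generators one at a time to $\eta_\lambda$ inside $V(\lambda)$ and track the weights and the admissibility constraints. The innermost operator $f_2^{(w)}$ requires $w \leq \langle \alpha_2^\vee,\lambda\rangle = b$; after this the weight becomes $\lambda + w\alpha_2$ shifted appropriately, and the next operator $f_1^{(v)}$ must satisfy $v \leq \langle \alpha_1^\vee, \lambda - w\alpha_2 + \cdots\rangle$, yielding (after computing the pairing using $\langle\alpha_1^\vee,\alpha_2\rangle = -1$) the upper bound $v \leq a + w$; finally $f_2^{(u)}$ contributes $u \leq a$. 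Combined with the defining constraint $u + w \leq v$ inherited from membership in $\mathbf{B}$, this reproduces exactly the ranges $0 \leq w \leq b$, $0 \leq u \leq a$, $u+w \leq v \leq a+w$ in $(1)$. The symmetric computation for $\theta_1^{(s)}\theta_2^{(t)}\theta_1^{(r)}$ gives $(2)$, with the subtle point being the lower cutoff $s \leq b-1$ and the strict inequality $s+1+r \leq t$ that arise from removing the overlap monomials already accounted for in family $(1)$.

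The main obstacle I anticipate is twofold. First, one must correctly compute the successive weight pairings so that the raising/lowering bounds come out as stated; this requires careful bookkeeping of how $\langle\alpha_i^\vee,\cdot\rangle$ evolves as each divided power is applied, using the Cartan matrix entries for $A_2$. Second, and more delicate, is handling the overlap between the two PBW bases: since $\theta_2^{(u)}\theta_1^{(u+w)}\theta_2^{(w)} = \theta_1^{(w)}\theta_2^{(u+w)}\theta_1^{(u)}$ in $\mathbf{f}$, one must ensure that no element is double-counted and that the boundary cases ($v = u+w$ versus the strict version in $(2)$) are partitioned correctly between $(1)$ and $(2)$. The asymmetry in the bounds — the upper range $a+w$ in $(1)$ versus $b+r$ in $(2)$, and the appearance of $b-1$ and the offset $s+1$ in $(2)$ — is precisely the combinatorial fingerprint of this overlap removal, and getting these boundary inequalities exactly right is where the care is needed. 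The final step is simply to check that the degenerate case $b = 0$ forces family $(2)$ to be empty (since $0 \leq s \leq b-1 = -1$ has no solutions), so only $(1)$ survives, as claimed.
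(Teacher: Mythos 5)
Your proposal is correct and follows essentially the same route as the paper, which proves this lemma simply by invoking [L5, Theorem 14.4.11] and translating its string conditions into the explicit inequalities (your dimension-count sanity check $\tfrac{(a+1)(b+1)(a+b+2)}{2}$ confirms the list). One small remark: the bound $u\leq a$ in $(1)$ does not really ``come from applying $f_2^{(u)}$'' --- the vector $f_1^{(v)}f_2^{(w)}\eta_\lambda$ is not $e_2$-extremal, so the step-by-step $sl_2$-string argument fails there --- but this is harmless because $u\leq a$ is already forced by $u+w\leq v\leq a+w$ (and likewise $s\leq b-1$ in $(2)$ is forced by $s+1+r\leq t\leq b+r$), so the only binding constraints are the ones your first two steps and the overlap analysis correctly produce.
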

\vskip2mm Given
$(s,t),(a,b)\in X^{+},$ we will consider the following partial order
on the set
$\mathbf{B}(s\omega_{1}+t\omega_{2})\times\mathbf{B}(a\omega_{1}+b\omega_{2}).$
We say that $(b_{1},b_1')\leq (b_2,b_2')$ if
$tr\hspace*{0.1cm}|b_1|-tr\hspace*{0.1cm}|b_1'|=tr\hspace*{0.1cm}|b_2|-tr\hspace*{0.1cm}|b_2'|,$
and if we have either $tr\hspace*{0.1cm}|b_1|< tr
\hspace*{0.1cm}|b_2|$ and  $tr \hspace*{0.1cm}|b_1'|< tr
\hspace*{0.1cm}|b_2'|,$ or $b_1=b_2,b_1'=b_2'.$

Let $-:V(a\omega_{1}+b\omega_{2})\rightarrow
V(a\omega_{1}+b\omega_{2})$ be the unique $\mathbb{Q}$-linear
involution such that
$\overline{u\eta_{(a,b)}}=\bar{u}\eta_{(a,b)},\forall u\in
    \mathbf{U};$ similarly, let $-:V(-s\omega_{1}-t\omega_{2})\rightarrow
    V(-s\omega_{1}-t\omega_{2})$ be the unique $\mathbb{Q}$-linear
involution such that
$\overline{u\xi_{(-s,-t)}}=\bar{u}\xi_{(-s,-t)},\forall~ u\in
\mathbf{U}.$ Let
$-\hspace*{-0.05cm}=\hspace*{-0.05cm}-\otimes-\hspace*{-0.05cm}:\hspace*{-0.05cm}V(-s\omega_{1}-t\omega_{2})\otimes
V(a\omega_{1}+b\omega_{2})\rightarrow
V(-s\omega_{1}-t\omega_{2})\otimes V(a\omega_{1}+b\omega_{2}).$
These elements $b^{+}\xi_{(-s,-t)}\otimes b'^{-}\eta_{(a,b)}$ with
$b\in \mathbf{B}(s\omega_{1}+t\omega_{2}),b'\in
\mathbf{B}(a\omega_{1}+b\omega_{2})$ form a $\mathbb{Q}(v)$-basis of
$V(-s\omega_{1}-t\omega_{2})\otimes V(a\omega_{1}+b\omega_{2}).$
They generate a $\mathbb{Z}[v^{-1}]$-submodule $\mathcal{L}$ and an
$\mathcal{A}$-submodule $\mathcal{L}_{\mathcal{A}}.$

By [L5, Theorem 4.1.2], there is a unique family of elements
$\Theta_{\nu}\in U^-_{-\nu}\otimes U^+_{\nu}(\nu\in \mathbb{N}[I])$
such that $\Theta_{0}=1\otimes1$ and
$\Theta=\sum\limits_{\nu}\Theta_{\nu}$ satisfies
$\Delta(u)\Theta=\Theta\overline{\Delta(\bar{u})},\forall ~u\in
\mathbf{U}.$ The element
$\Theta$ is called the quasi-R-matrix. Then we can define a linear map
$\Theta^{\prime}:V(-s\omega_{1}-t\omega_{2})\otimes
V(a\omega_{1}+b\omega_{2})\rightarrow
V(-s\omega_{1}-t\omega_{2})\otimes V(a\omega_{1}+b\omega_{2})$ by
$\Theta^{\prime}(m\otimes m')=\sum\limits_{\nu}\Theta_{\nu}(m\otimes
m'),\forall~ m\in V(-s\omega_{1}-t\omega_{2}),m'\in
V(a\omega_{1}+b\omega_{2}).$ This is well-defined since only
finitely many terms of the sum are nonzero.

Now we define $\Psi:V(-s\omega_{1}-t\omega_{2})\otimes
V(a\omega_{1}+b\omega_{2})\rightarrow
V(-s\omega_{1}-t\omega_{2})\otimes V(a\omega_{1}+b\omega_{2})$ by
$\Psi(x)=\Theta^{\prime}(\bar{x}).$ In fact
$\Psi(\mathcal{L}_{\mathcal{A}})\subset\mathcal{L}_{\mathcal{A}}$
and $\Psi^{2}=1.$

From the definition, we have for all $b_1\in
\mathbf{B}(s\omega_{1}+t\omega_{2}),b_1'\in
\mathbf{B}(a\omega_{1}+b\omega_{2})$
$$\Psi(b_{1}^{+}\xi_{(-s,-t)}\otimes
b_1'^{-}\eta_{(a,b)})=\sum\limits_{\substack{ b_2\in
\mathbf{B}(s\omega_{1}+t\omega_{2})\\b_2'\in
\mathbf{B}(a\omega_{1}+b\omega_{2})}}\rho_{b_1,b_1';b_2,b_2'}b_{2}^{+}\xi_{(-s,-t)}\otimes
b_2'^{-}\eta_{(a,b)}.$$

where $\rho_{b_1,b_1';b_2,b_2'}\in \mathcal{A}$ and
$\rho_{b_1,b_1';b_2,b_2'}=0$ unless $(b_1,b_1')\geq (b_2,b_2');$
hence the last sum is finite.

Note also that $\rho_{b_1,b_{1}';b_1,b_{1}'}=1$ and
$$\sum\limits_{\substack{ b_2\in
\mathbf{B}(s\omega_{1}+t\omega_{2})\\b_2'\in
\mathbf{B}(a\omega_{1}+b\omega_{2})}}\bar{\rho}_{b_{1},b_{1}';b_{2},b_{2}'}
\rho_{b_2,b_{2}';b_3,b_{3}'}=\delta_{b_1,b_3}\delta_{b_{1}',b_{3}'}.$$

for any $b_1,b_3\in
\mathbf{B}(s\omega_{1}+t\omega_{2}),b_{1}',b_{3}'\in
\mathbf{B}(a\omega_{1}+b\omega_{2}).$

Applying [L5, Lemma 24.2.1], we see that there is a unique family of
elements $\pi_{b_{1},b_{1}';b_{2},b_{2}'} \in \mathbb{Z}[v^{-1}]$
defined for $b_1,b_2\in
\mathbf{B}(s\omega_{1}+t\omega_{2}),b_{1}',b_{2}'\in
\mathbf{B}(a\omega_{1}+b\omega_{2})$ such that
\begin{equation*}\pi_{b_{1},b_{1}';b_{1},b_{1}'}=1;\end{equation*}
\begin{equation*}\pi_{b_{1},b_{1}';b_{2},b_{2}'}\in v^{-1}\mathbb{Z}[v^{-1}], ~~\hbox{if }(b_1,b_1')\neq(b_2,b_2');\end{equation*}
\begin{equation*}\pi_{b_{1},b_{1}';b_{2},b_{2}'}=0, ~~\hbox{unless}~(b_1,b_1')\geq(b_2,b_2');\end{equation*}
\begin{equation*}\pi_{b_{1},b_{1}';b_{2},b_{2}'}=\sum\limits_{b_3,b_3'}\bar{\pi}_{b_{1},b_{1}';b_{3},b_{3}'}\rho_{b_3,b_{3}';b_2,b_{2}'},\
~~\hbox{for all }(b_1,b_1')\geq(b_2,b_2').\end{equation*}

\begin{theorem}{\rm (see [L5, Theorem~24.3.3])} $(a)$ For any $(b_1,b_1')\in
\mathbf{B}(s\omega_{1}+t\omega_{2})\times
\mathbf{B}(a\omega_{1}+b\omega_{2}) $, there is a unique element
$(b_1\lozenge b_1')_{(s,t),(a,b)}\in \mathcal{L}$ such
that$$\Psi((b_1\lozenge
b_1')_{(s,t),(a,b)})\hspace*{-0.1cm}=\hspace*{-0.1cm}(b_1\lozenge
b_1')_{(s,t),(a,b)},~~(b_1\lozenge
b_1')_{(s,t),(a,b)}-b_{1}^{+}\xi_{(-s,-t)}\otimes
b_1'^{-}\eta_{(a,b)}\hspace*{-0.1cm}\in\hspace*{-0.1cm}
v^{-1}\mathcal{L}.$$

$(b) $\begin{eqnarray*}
   && (b_1\lozenge b_1')_{(s,t),(a,b)} \\
   &=&\hspace*{-0.3cm}b_{1}^{+}\xi_{(-s,-t)}\hspace*{-0.1cm}\otimes\hspace*{-0.1cm}
b_1'^{-}\eta_{(a,b)}+\sum\limits_{\substack{ (b_2,b_2')\in
\mathbf{B}(s\omega_{1}+t\omega_{2})\times
\mathbf{B}(a\omega_{1}+b\omega_{2})\\(b_2,b_2')<(b_1,b_1')}}\hspace*{-0.2cm}\theta_{b_{1},b_{1}';b_{2},b_{2}'}b_{2}^{+}\xi_{(-s,-t)}\hspace*{-0.1cm}\otimes\hspace*{-0.1cm}
b_{2}'^{-}\eta_{(a,b).}
\end{eqnarray*}
where $\theta_{b_{1},b_{1}';b_{2},b_{2}'}\in
v^{-1}\mathbb{Z}[v^{-1}].$

$(c)$ These elements $(b_1\lozenge b_1')_{(s,t),(a,b)}$ with
$b_1,b_1'$ as above form a $\mathbb{Q}(v)$-basis of
$V(-s\omega_{1}-t\omega_{2})\otimes V(a\omega_{1}+b\omega_{2})$, a
$\mathbb{Z}[v^{-1}]$-basis of $\mathcal{L}$, and an
$\mathcal{A}$-basis of $\mathcal{L}_{\mathcal{A}}$.

$(d)$ The natural homomorphism
$\mathcal{L}\cap\Psi(\mathcal{L})\rightarrow
\mathcal{L}/v^{-1}\mathcal{L}$ is an isomorphism.\end{theorem}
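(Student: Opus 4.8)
The plan is to construct the element $(b_1\lozenge b_1')_{(s,t),(a,b)}$ directly from the coefficients $\pi_{b_1,b_1';b_2,b_2'}\in\mathbb{Z}[v^{-1}]$ produced just above the statement, and then to read off parts (b)--(d) from the unitriangular shape of the resulting transition matrix. Throughout I abbreviate the standard basis vector $b_2^{+}\xi_{(-s,-t)}\otimes b_2'^{-}\eta_{(a,b)}$ by $x_{b_2,b_2'}$; recall these form a $\mathbb{Q}(v)$-basis of the tensor product, a $\mathbb{Z}[v^{-1}]$-basis of $\mathcal{L}$, and an $\mathcal{A}$-basis of $\mathcal{L}_{\mathcal{A}}$. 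For the existence part of (a) I would set
$$(b_1\lozenge b_1')_{(s,t),(a,b)}=\sum_{b_2,b_2'}\pi_{b_1,b_1';b_2,b_2'}\,x_{b_2,b_2'}.$$
Since $\Psi$ is bar-semilinear and $\Psi(x_{b_2,b_2'})=\sum_{b_3,b_3'}\rho_{b_2,b_2';b_3,b_3'}x_{b_3,b_3'}$, applying $\Psi$ produces for $x_{b_3,b_3'}$ the coefficient $\sum_{b_2,b_2'}\bar{\pi}_{b_1,b_1';b_2,b_2'}\rho_{b_2,b_2';b_3,b_3'}$, which by the last defining relation of $\pi$ equals $\pi_{b_1,b_1';b_3,b_3'}$; thus $\Psi$ fixes the element. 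The requirement $(b_1\lozenge b_1')_{(s,t),(a,b)}-x_{b_1,b_1'}\in v^{-1}\mathcal{L}$ follows from $\pi_{b_1,b_1';b_1,b_1'}=1$ together with $\pi_{b_1,b_1';b_2,b_2'}\in v^{-1}\mathbb{Z}[v^{-1}]$ off the diagonal, and membership in $\mathcal{L}$ is clear since the $\pi$'s lie in $\mathbb{Z}[v^{-1}]$.

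For uniqueness I would run the standard leading-coefficient argument. Suppose $y\in\mathcal{L}$ satisfies $\Psi(y)=y$ and $y\in v^{-1}\mathcal{L}$; write $y=\sum c_{b_2,b_2'}x_{b_2,b_2'}$ with $c_{b_2,b_2'}\in v^{-1}\mathbb{Z}[v^{-1}]$, and assume $y\neq0$. Pick an index $(b_2,b_2')$ maximal for $\leq$ among those with $c_{b_2,b_2'}\neq0$. Because $\rho_{b,b';b_2,b_2'}=0$ unless $(b,b')\geq(b_2,b_2')$ while $\rho_{b_2,b_2';b_2,b_2'}=1$, the coefficient of $x_{b_2,b_2'}$ in $\Psi(y)$ is exactly $\bar{c}_{b_2,b_2'}$; comparing with $y$ forces $\bar{c}_{b_2,b_2'}=c_{b_2,b_2'}$. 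But $c_{b_2,b_2'}\in v^{-1}\mathbb{Z}[v^{-1}]$ and $\bar{c}_{b_2,b_2'}\in v\mathbb{Z}[v]$ can coincide only if both are $0$, a contradiction. Applying this to the difference of two candidate elements gives uniqueness. Part (b) is then just the definition above, with $\theta_{b_1,b_1';b_2,b_2'}=\pi_{b_1,b_1';b_2,b_2'}$ for $(b_2,b_2')<(b_1,b_1')$, which indeed lies in $v^{-1}\mathbb{Z}[v^{-1}]$.

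For (c) and (d) I would exploit that the matrix $(\pi_{b_1,b_1';b_2,b_2'})$ is unitriangular with respect to $\leq$: ones on the diagonal, zero outside the cone $\geq$, entries in $\mathbb{Z}[v^{-1}]$. Hence it is invertible over $\mathbb{Z}[v^{-1}]$ with unitriangular inverse, so the family $\{(b_1\lozenge b_1')_{(s,t),(a,b)}\}$ and the family $\{x_{b_1,b_1'}\}$ span the same $\mathbb{Q}(v)$-space, the same $\mathbb{Z}[v^{-1}]$-module $\mathcal{L}$, and the same $\mathcal{A}$-module $\mathcal{L}_{\mathcal{A}}$; this is precisely (c). For (d), each $(b_1\lozenge b_1')_{(s,t),(a,b)}$ lies in $\mathcal{L}\cap\Psi(\mathcal{L})$ and reduces to $x_{b_1,b_1'}$ modulo $v^{-1}\mathcal{L}$, so its image is the standard basis of $\mathcal{L}/v^{-1}\mathcal{L}$; surjectivity of the natural map is then immediate, and injectivity follows from the uniqueness in (a), since a kernel element is $\Psi$-fixed and lies in $v^{-1}\mathcal{L}$, hence is $0$.

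The only genuinely delicate step is the uniqueness argument, where the $\Psi$-invariance, the strict $v^{-1}$-integrality, and the triangularity of $\rho$ must be combined exactly as above; everything else is formal bookkeeping with the unitriangular $\pi$-matrix. The substantive analytic inputs---that $\Psi(\mathcal{L}_{\mathcal{A}})\subset\mathcal{L}_{\mathcal{A}}$, that $\Psi^{2}=1$, and that $\rho_{b_1,b_1';b_2,b_2'}=0$ unless $(b_1,b_1')\geq(b_2,b_2')$ with value $1$ on the diagonal---have all been recorded before the statement, so they may be invoked directly rather than reproved.
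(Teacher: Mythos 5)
The paper does not actually prove this theorem: it is quoted from Lusztig [L5, Theorem~24.3.3], and the only indication of a proof in the text is the remark immediately after the statement that the element $\sum_{b_2,b_2'}\pi_{b_1,b_1';b_2,b_2'}\,b_2^{+}\xi_{(-s,-t)}\otimes b_2'^{-}\eta_{(a,b)}$ satisfies the requirements of (a). Your proposal reconstructs the standard argument behind that citation, starting from exactly the same explicit formula, and parts (a), (b), (c) are correct and complete: the $\Psi$-invariance computation via the relation $\pi=\bar{\pi}\rho$, the leading-coefficient uniqueness argument, and the unitriangular change of basis are all sound (finiteness of $\mathbf{B}(s\omega_{1}+t\omega_{2})\times\mathbf{B}(a\omega_{1}+b\omega_{2})$ is what lets you refine the partial order to a total one and invert the transition matrix over $\mathbb{Z}[v^{-1}]$). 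The one weak spot is the injectivity half of (d): you assert that a kernel element of $\mathcal{L}\cap\Psi(\mathcal{L})\rightarrow\mathcal{L}/v^{-1}\mathcal{L}$ is $\Psi$-fixed, but an element of $\mathcal{L}\cap\Psi(\mathcal{L})$ is not fixed by $\Psi$ on the face of it, so as written this step is a non sequitur. It is easily repaired in either of two ways: (i) deduce from (c) and the semilinearity of $\Psi$ that $\Psi(\mathcal{L})$ is the $\mathbb{Z}[v]$-span of the elements $(b_1\lozenge b_1')_{(s,t),(a,b)}$, whence $\mathcal{L}\cap\Psi(\mathcal{L})$ is their $\mathbb{Z}$-span and every element of it is indeed $\Psi$-fixed, so your uniqueness argument applies; or (ii) rerun the leading-coefficient argument on a kernel element $y$ using only the weaker facts $y\in v^{-1}\mathcal{L}$ and $\Psi(y)\in\mathcal{L}$ (the latter because $\Psi^{2}=1$), which forces the maximal nonzero coefficient to lie in $v^{-1}\mathbb{Z}[v^{-1}]\cap\mathbb{Z}[v]=\{0\}$. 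With that one-line patch the proof is complete and agrees with the argument the paper delegates to [L5].
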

\vskip1mm In fact the element $(b_1\lozenge
b_1')_{(s,t),(a,b)}=\sum\limits_{b_2,b_2'}\pi_{b_{1},b_{1}';b_{2},b_{2}'}b_{2}^{+}\xi_{(-s,-t)}\otimes
b_{2}'^{-}\eta_{(a,b)}$ satisfies the requirements of (a). The basis just defined as above is called the canonical basis of
$V(-s\omega_{1}-t\omega_{2})\otimes V(a\omega_{1}+b\omega_{2}).$

If we assume that $\zeta=(a,b)-(s,t),$ then
$u\mapsto u(\xi_{(-s,-t)}\otimes\eta_{(a,b)})$ defines a surjective
map $\dot{\mathbf{U}}1_{\zeta}\rightarrow
V(-s\omega_{1}-t\omega_{2})\otimes V(a\omega_{1}+b\omega_{2})$ (see [L5, Proposition 23.3.6]).

\begin{theorem}
{\rm (see [L5, Theorem~25.2.1])} Let $\zeta\in X$ and let $b,b''\in
\mathbf{B}.$

$(a)$ There is a unique element $u=b\lozenge_{\zeta} b''\in
\dot{\mathbf{U}}_{\mathcal{A}}1_{\zeta}$
 such that $$u(\xi_{(-s,-t)}\otimes\eta_{(a,b)})=(b\lozenge b'')_{(s,t),(a,b)}$$ for any $(s,t),(a,b)\in X^{+}$
 such that $b\in \mathbf{B}(s\omega_{1}+t\omega_{2}),b''\in
 \mathbf{B}(a\omega_{1}+b\omega_{2})$ and $(a,b)-(s,t)=\zeta.$

 $(b)$ If $(s,t),(a,b)\in X^{+}$ are such that $(a,b)-(s,t)=\zeta,$
 and either $b\notin  \mathbf{B}(s\omega_{1}+t\omega_{2})$ or $b''\notin
 \mathbf{B}(a\omega_{1}+b\omega_{2}),$ then $u(\xi_{(-s,-t)}\otimes\eta_{(a,b)})=0$ $($$u$ as in $(a)$$)$.

  $(c)$ The element $u$ in $(a)$ satisfies $\bar{u}=u.$

  $(d)$ These elements $b\lozenge_{\zeta} b'',$ for various $\zeta,b,b''$ as above form a $\mathbb{Q}(v)$-basis of
  $\dot{\mathbf{U}}$, and an $\mathcal{A}$-basis of
  $\dot{\mathbf{U}}_{\mathcal{A}}.$
\end{theorem}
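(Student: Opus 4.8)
The plan is to characterize $u=b\lozenge_\zeta b''$ by its action on the family of tensor products $V(-s\omega_{1}-t\omega_{2})\otimes V(a\omega_{1}+b\omega_{2})$ with $(a,b)-(s,t)=\zeta$, transporting the canonical bases of these modules (Theorem 2.1) back to $\dot{\mathbf{U}}1_\zeta$ along the surjection of [L5, Proposition 23.3.6]. Everything hinges on one compatibility: if $(s_1,t_1)\geq(s,t)$ and $(a_1,b_1)\geq(a,b)$ componentwise with both differences equal to $\zeta$, there is a $\mathbf{U}$-linear projection
\[ P: V(-s_1\omega_{1}-t_1\omega_{2})\otimes V(a_1\omega_{1}+b_1\omega_{2})\longrightarrow V(-s\omega_{1}-t\omega_{2})\otimes V(a\omega_{1}+b\omega_{2}) \]
sending $\xi_{(-s_1,-t_1)}\otimes\eta_{(a_1,b_1)}$ to $\xi_{(-s,-t)}\otimes\eta_{(a,b)}$. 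Its existence follows because both distinguished vectors have weight $\zeta$ and generate their modules under $\dot{\mathbf{U}}1_\zeta$ by [L5, Proposition 23.3.6], so I only need the annihilator of the source vector to sit inside that of the target, a monotonicity in the dominant weights that I would read off from the defining relations of the simple modules. I would then check that $P$ intertwines the two involutions $\Psi$, using that $P$ respects the naive bar $-\otimes-$ (it is $\mathbb{Q}$-linear and fixes a bar-invariant generator) and is compatible with the quasi-R-matrix action $\Theta'$, since $\Theta$ is defined uniformly on $\mathbf{U}$ while $P$ is $\mathbf{U}$-linear. Once $P\Psi=\Psi P$ and $P(\mathcal{L})\subseteq\mathcal{L}$ are known, Theorem 2.1(a),(d) force $P$ to carry each $(b_1\lozenge b_1')_{(s_1,t_1),(a_1,b_1)}$ either to $(b_1\lozenge b_1')_{(s,t),(a,b)}$ or to $0$, according to whether $b_1\in\mathbf{B}(s\omega_{1}+t\omega_{2})$ and $b_1'\in\mathbf{B}(a\omega_{1}+b\omega_{2})$.

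With this in hand I would prove (a) and (b). Since every $b\in\mathbf{B}$ lies in $\mathbf{B}(\lambda)$ for all sufficiently large dominant $\lambda$, I choose $(s,t),(a,b)\in X^{+}$ large enough that $b\in\mathbf{B}(s\omega_{1}+t\omega_{2})$, $b''\in\mathbf{B}(a\omega_{1}+b\omega_{2})$ and $(a,b)-(s,t)=\zeta$; then $(b\lozenge b'')_{(s,t),(a,b)}\in\mathcal{L}_{\mathcal{A}}$ is hit by some $u\in\dot{\mathbf{U}}_{\mathcal{A}}1_\zeta$ under the integral form of the action map. The compatibility $P$ shows that $u$ is independent of the choice of large weights, that it realizes $(b\lozenge b'')_{(s,t),(a,b)}$ for every admissible pair, and that $u(\xi_{(-s,-t)}\otimes\eta_{(a,b)})=0$ exactly when one factor leaves its $\mathbf{B}(\cdot)$, which is (b). Uniqueness in (a) reduces to injectivity of the joint action map $\dot{\mathbf{U}}1_\zeta\to\prod V(-s\omega_{1}-t\omega_{2})\otimes V(a\omega_{1}+b\omega_{2})$, which I would obtain from the standard fact that finite-dimensional integrable $\mathbf{U}$-modules separate the points of $\dot{\mathbf{U}}$, together with the observation that any such module with a cyclic vector of weight $\zeta$ is a quotient of one of these tensor products.

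For (c) I would use that $\xi_{(-s,-t)}\otimes\eta_{(a,b)}$ is $\Psi$-fixed: it is bar-invariant, and $\Theta_\nu(\xi\otimes\eta)=0$ for $\nu\neq0$ because $U^-_{-\nu}$ annihilates the lowest-weight vector $\xi$. Combined with the intertwining $\Psi(u\cdot m)=\bar u\cdot\Psi(m)$ afforded by $\Delta(u)\Theta=\Theta\,\overline{\Delta(\bar u)}$, this gives $\bar u(\xi\otimes\eta)=\Psi(u(\xi\otimes\eta))=\Psi((b\lozenge b'')_{(s,t),(a,b)})=(b\lozenge b'')_{(s,t),(a,b)}$, so $\bar u$ satisfies the defining property of $u$ and $\bar u=u$ by uniqueness. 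For (d), linear independence follows after separating by $\zeta$ through the orthogonal idempotents $1_\zeta$: for a finite relation I take $(s,t),(a,b)$ large enough to contain every $b,b''$ occurring and apply the action map, turning it into a relation among distinct canonical basis vectors of the tensor product, which are independent by Theorem 2.1(c). For spanning I compare with the spanning set $\{b^{+}1_\zeta b''^{-}:b,b''\in\mathbf{B}\}$ of $\dot{\mathbf{U}}_{\mathcal{A}}1_\zeta$; expanding $(b\lozenge b'')_{(s,t),(a,b)}$ by Theorem 2.1(b) exhibits an upper-unitriangular transition for the order $(b_2,b_2')<(b_1,b_1')$, with off-diagonal entries in $v^{-1}\mathbb{Z}[v^{-1}]$, whence the $b\lozenge_\zeta b''$ form an $\mathcal{A}$-basis of $\dot{\mathbf{U}}_{\mathcal{A}}1_\zeta$ and a $\mathbb{Q}(v)$-basis of $\dot{\mathbf{U}}1_\zeta$; summing over $\zeta$ completes (d).

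The main obstacle is the first step, the compatibility of the canonical bases under the projections $P$. Constructing $P$ (the monotonicity of annihilators) and, above all, verifying $P\Psi=\Psi P$ together with $P(\mathcal{L})\subseteq\mathcal{L}$ demand careful bookkeeping of the quasi-R-matrix across modules of different highest and lowest weights; by contrast the lifting in the second step, the bar-invariance in the third, and the unitriangularity for spanning are comparatively formal once this compatibility is established.
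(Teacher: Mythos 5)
The paper does not actually prove this statement: Theorem~2.2 is quoted verbatim from [L5, Theorem~25.2.1] and used as a black box, so there is no in-paper argument to measure your proposal against. What you have written is, in substance, a reconstruction of Lusztig's own proof in [L5, \S25.1--25.2]: transition maps between the tensor products $V(-s\omega_{1}-t\omega_{2})\otimes V(a\omega_{1}+b\omega_{2})$ attached to pairs of dominant weights with fixed difference $\zeta$, compatibility of these maps with the canonical bases of Theorem~2.1, stabilization defining $b\lozenge_{\zeta}b''$, bar-invariance via the uniqueness in (a), and a unitriangular comparison with the spanning set $\{b^{+}1_{\zeta}b''^{-}\}$ for (d). The skeleton is the right one.

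As a proof, however, there is a genuine gap exactly where you place ``the main obstacle.'' The compatibility of the canonical bases with the transition map $P$ (Lusztig's [L5, Proposition~25.1.10]) carries essentially all of the content of the theorem, and you assert it rather than prove it; worse, the one justification you sketch for it does not work. The operator $\Theta'$ is not given by the comultiplication: it is multiplication by $\sum_{\nu}\Theta_{\nu}$ with $\Theta_{\nu}\in U^{-}_{-\nu}\otimes U^{+}_{\nu}$ acting factorwise through the $\mathbf{U}\otimes\mathbf{U}$-module structure, so $\mathbf{U}$-linearity of $P$ (linearity over the image of $\Delta$) does not yield $P\Theta'=\Theta'P$. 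To argue that way you would need $P$ to be an external tensor product of maps on the two factors, which is impossible: by simplicity there is no nonzero $\mathbf{U}$-linear map $V(a_{1}\omega_{1}+b_{1}\omega_{2})\rightarrow V(a\omega_{1}+b\omega_{2})$ sending highest weight vector to highest weight vector when the weights differ. Lusztig instead assembles the transition map from based-module morphisms (maps of the type $V(\lambda+\lambda_{1})\rightarrow V(\lambda)\otimes V(\lambda_{1})$ together with a contraction), and the verification that the composite commutes with $\Psi$ and preserves $\mathcal{L}$ is the real work. Two smaller points: the existence of $P$ itself rests on the containment of annihilators of $\xi_{(-s_{1},-t_{1})}\otimes\eta_{(a_{1},b_{1})}$ in $\dot{\mathbf{U}}1_{\zeta}$, which is [L5, 23.3] material and not something one simply ``reads off'' from the defining relations of the two simple factors; and the unitriangularity you invoke for spanning in (d) is an identity inside the module, whereas $b^{+}1_{\zeta}b''^{-}(\xi_{(-s,-t)}\otimes\eta_{(a,b)})$ is \emph{not} equal to $b^{+}\xi_{(-s,-t)}\otimes b''^{-}\eta_{(a,b)}$ (the $e_{i}$ do not kill $\xi_{(-s,-t)}$), so transporting the triangularity back to $\dot{\mathbf{U}}1_{\zeta}$ needs the further comparison of [L5, 23.3.6]. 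If you intend to cite [L5, 25.1.10] the remainder of your argument is sound; as a self-contained proof it is incomplete.
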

\vskip1mm The basis of $\dot{\mathbf{U}}$ just defined is called the
canonical basis of $\dot{\mathbf{U}}$, and we denote it by
$\dot{\mathbf{B}}$.

In [L5], Lusztig had defined an important anti-automorphism $\sigma$
on a modified quantized enveloping algebra and conjectured that
elements of the canonical basis under this map also belong to the
canonical basis, which had been proved by Kashiwara in [K2, Theorem
4.3.2], so we have the following theorem in our case.

\begin{theorem} For any element $b$ $\in$ $\dot{\mathbf{B}}$, we
have $\sigma(b)$ $\in$ $\dot{\mathbf{B}}.$\end{theorem}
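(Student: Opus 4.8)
The plan is to deduce $\sigma(b)\in\dot{\mathbf{B}}$ from the characterization of the canonical basis packaged in Theorems 2.1 and 2.2. Recall that $b=b_1\lozenge_{\zeta} b_1''$ is the \emph{unique} element of $\dot{\mathbf{U}}_{\mathcal{A}}1_{\zeta}$ whose action on $\xi_{(-s,-t)}\otimes\eta_{(a,b)}$ equals the canonical basis vector $(b_1\lozenge b_1')_{(s,t),(a,b)}$ for all admissible $(s,t),(a,b)\in X^{+}$ with $(a,b)-(s,t)=\zeta$, and that each such $(b_1\lozenge b_1')_{(s,t),(a,b)}$ is pinned down inside $V(-s\omega_{1}-t\omega_{2})\otimes V(a\omega_{1}+b\omega_{2})$ by being $\Psi$-invariant and congruent modulo $v^{-1}\mathcal{L}$ to a single standard monomial $b_1^{+}\xi_{(-s,-t)}\otimes b_1'^{-}\eta_{(a,b)}$. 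Because $\sigma$ is an anti-automorphism that fixes each $e_i,f_i$, sends $k_i\mapsto k_i^{-1}$, and satisfies $\sigma(1_{\lambda})=1_{-\lambda}$, it interchanges the roles of highest and lowest weights and carries $\dot{\mathbf{U}}_{\mathcal{A}}1_{\zeta}$ into a single component $\dot{\mathbf{U}}_{\mathcal{A}}1_{\zeta^{*}}$ (homogeneity of $b$ determines the new right weight $\zeta^{*}$). The strategy is therefore to verify that $\sigma(b)$ is bar-invariant, integral, and has the correct leading term on the \emph{swapped} tensor product, and then invoke uniqueness.

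Two of these checks are routine. For bar-invariance, one verifies $\overline{\sigma(x)}=\sigma(\bar{x})$ on the generators $e_i,f_i,k_i$ and on $1_{\lambda}$: both sides reverse products and obey the scalar rule $\bar{v}=v^{-1}$, so the identity propagates to all of $\dot{\mathbf{U}}$. Since each $b\in\dot{\mathbf{B}}$ satisfies $\bar{b}=b$ by Theorem 2.2$(c)$, it follows that $\overline{\sigma(b)}=\sigma(\bar{b})=\sigma(b)$. For integrality, note that $\sigma$ fixes $e_i,f_i$ and reverses products, hence restricts to an anti-automorphism of $\mathbf{f}$ preserving $\mathbf{f}_{\mathcal{A}}$ (divided powers are fixed); together with $\sigma(1_{\lambda})=1_{-\lambda}$ this sends each spanning monomial $x^{+}1_{\lambda}x'^{-}$ to $(\sigma x')^{-}1_{-\lambda}(\sigma x)^{+}$, again a monomial of the required form, so $\sigma(\dot{\mathbf{U}}_{\mathcal{A}})=\dot{\mathbf{U}}_{\mathcal{A}}$.

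The substantive step is the leading-term analysis: one must show that the action of $\sigma(b)$ on the swapped vector $\xi_{(-a,-b)}\otimes\eta_{(s,t)}$ is again a $\Psi$-invariant element of the lattice that is congruent modulo $v^{-1}$ to a single standard monomial, so that Theorem 2.1$(a)$ identifies it with a canonical basis vector $(b_2\lozenge b_2')$ of $V(-a\omega_{1}-b\omega_{2})\otimes V(s\omega_{1}+t\omega_{2})$. The natural mechanism is a nondegenerate pairing between the original tensor product and the swapped one under which $\sigma$ is adjoint to the identity and the canonical basis is almost orthonormal; the bar-invariant, integral vector $\sigma(b)$ then selects a unique leading crystal monomial, with the lower terms forced into $v^{-1}\mathbb{Z}[v^{-1}]$ by triangularity. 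This is exactly the assertion that $\sigma$ descends at the crystal limit $v\to\infty$ to Kashiwara's star involution, which permutes the crystal basis; I expect this crystal-level bijection to be the main obstacle, and it is precisely what [K2, Theorem 4.3.2] supplies. Once it is in place, the three verified properties and the uniqueness clause of Theorem 2.2$(a)$ force $\sigma(b)$ to equal the canonical basis element $b_2\lozenge_{\zeta^{*}} b_2'$ with the identified leading term, so $\sigma(b)\in\dot{\mathbf{B}}$, as claimed.
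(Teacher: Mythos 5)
Your proposal is correct and in substance coincides with the paper's treatment: the paper offers no independent proof of this theorem, deducing it directly from Kashiwara's result [K2, Theorem 4.3.2] on the stability of the canonical basis of $\dot{\mathbf{U}}$ under $\sigma$. Your additional scaffolding (bar-invariance, integrality, and the uniqueness characterization from Theorems 2.1--2.2) is sound but ultimately redundant, since you too defer the one substantive step --- the leading-term/crystal-level statement --- to that same theorem of Kashiwara.
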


\section{A description of all elements of $\dot{\mathbf{B}}$}

The main result of this note is the following theorem which gives
all elements of the canonical basis $\dot{\mathbf{B}}$ of
$\dot{\mathbf{U}}.$
\medskip

\begin{theorem}  All elements of the canonical basis
$\dot{\mathbf{B}}$ are given by the following list:\\

$\mathbf{Part}$ $\mathbf{(1.1)}$\\

$e_{2}^{(h)}e_{1}^{(k)}e_{2}^{(j)}1_{(l,m)}f_{2}^{(u)}f_{1}^{(v)}f_{2}^{(w)},$
 \vskip2mm
 $\mbox{if}~-l \geq v+k-j-u,~~-m \geq u+j,~~k\geq h+j,~~v\geq u+w;\hfill(1)$\\

$\sum\limits_{0\leq p \leq j,u}(-1)^{p}\left[\begin{array}{c}m+u+j+p-1 \\
p
\end{array}\right]e_{2}^{(h)}e_{1}^{(k)}e_{2}^{(j-p)}1_{(l-p,m+2p)}f_{2}^{(u-p)}f_{1}^{(v)}f_{2}^{(w)},
$\vskip2mm $\mbox{if}~-l \geq v-u+k-j,~~u+j+(u+w-v)\leq -m \leq u+j,~~-m \geq
u+j+(j+h-k),~~k\geq h+j,~~v\geq u+w; \hfill(2)$\\

$\sum\limits_{\substack{0\leq p \leq j \\0\leq q \leq h\\0\leq p+q \leq u}}\hspace*{-0.2cm}(-1)^{p+q}\hspace*{-0.1cm}\left[\hspace*{-0.15cm}\begin{array}{c}m+u+j+q+p-1 \\
p
\end{array}\hspace*{-0.15cm}\right]\hspace*{-0.2cm}\left[\hspace*{-0.15cm}\begin{array}{c}m+u+j+(j+h-k)+q-1 \\
q
\end{array}\hspace*{-0.15cm}\right]$\vskip2mm$\times
e_{2}^{(h-q)}e_{1}^{(k)}e_{2}^{(j-p)}1_{(l-p-q,m+2p+2q)}f_{2}^{(u-p-q)}f_{1}^{(v)}f_{2}^{(w)},
$ \vskip2mm $\mbox{if}~-l \geq v-u+k-j,~~-m\geq u+j+(u+w-v),~~-m \leq
u+j+(j+h-k), ~~k\geq
h+j, ~~v\geq u+w;\hfill(3)$\\

$\sum\limits_{\substack{0\leq p\leq u\\0\leq q\leq w\\0\leq p+q \leq j}}(-1)^{p+q}\hspace*{-0.1cm}\left[\hspace*{-0.15cm}\begin{array}{c}u+j+m+q+p-1 \\
p
\end{array}\hspace*{-0.15cm}\right]\hspace*{-0.2cm}\left[\hspace*{-0.15cm}\begin{array}{c}u+j+m+u+w-v+q-1 \\
q
\end{array}\hspace*{-0.15cm}\right]$\vskip2mm$\times
e_{2}^{(h)}e_{1}^{(k)}e_{2}^{(j-p-q)}1_{(l-p-q,m+2p+2q)}f_{2}^{(u-p)}f_{1}^{(v)}f_{2}^{(w-q)},$
\vskip2mm $\mbox{if}~-l\geq v-u+k-j,~~-m \leq u+j+u+w-v,~~-m\geq
u+j+(j+h-k),~~k\geq
h+j,~~v\geq u+w;\hfill(4)$\\

$\sum\limits_{\substack{0\leq p\leq j\\ 0\leq q\leq w\\0\leq p+q\leq
j\\ 0\leq r\leq h\\0\leq p+r\leq u}}(-1)^{p+q+r}
\left[\hspace*{-0.2cm}\begin{array}{c}u\hspace*{-0.03cm}+\hspace*{-0.03cm}j\hspace*{-0.03cm}+\hspace*{-0.03cm}m\hspace*{-0.03cm}+\hspace*{-0.03cm}r\hspace*{-0.03cm}+\hspace*{-0.03cm}q\hspace*{-0.03cm}+\hspace*{-0.03cm}p\hspace*{-0.03cm}-\hspace*{-0.03cm}1 \\
p
\end{array}\hspace*{-0.2cm}\right]$\vskip2mm$\times \left[\hspace*{-0.2cm}\begin{array}{c}u\hspace*{-0.03cm}+\hspace*{-0.03cm}j\hspace*{-0.03cm}+\hspace*{-0.03cm}m\hspace*{-0.03cm}+\hspace*{-0.03cm}u\hspace*{-0.03cm}+\hspace*{-0.03cm}w\hspace*{-0.03cm}-\hspace*{-0.03cm}v\hspace*{-0.03cm}+\hspace*{-0.03cm}q\hspace*{-0.03cm}-\hspace*{-0.03cm}1 \\
q
\end{array}\hspace*{-0.2cm}\right]\hspace*{-0.1cm}\left[\hspace*{-0.2cm}\begin{array}{c}m\hspace*{-0.02cm}+\hspace*{-0.02cm}u\hspace*{-0.02cm}+\hspace*{-0.02cm}j\hspace*{-0.02cm}+\hspace*{-0.02cm}(j\hspace*{-0.02cm}+\hspace*{-0.02cm}h\hspace*{-0.02cm}-\hspace*{-0.02cm}k)\hspace*{-0.02cm}+\hspace*{-0.02cm}r\hspace*{-0.02cm}-\hspace*{-0.02cm}1 \\
r
\end{array}\hspace*{-0.2cm}\right]$\vskip2mm
$\times
e_{2}^{(h-r)}e_{1}^{(k)}e_{2}^{(j-p-q)}1_{(l-p-q-r,m+2p+2q+2r)}f_{2}^{(u-p-r)}f_{1}^{(v)}f_{2}^{(w-q)},$
\vskip2mm $\mbox{if}~-l\geq v-u+k-j,~~-m \leq
u+j+u+w-v,~~u+j+(j+h-k)+(u+w-v)\leq -m\leq
u+j+(j+h-k),~~k\geq h+j, ~~v\geq u+w;\hfill(5)$\\

$\sum\limits_{\substack{0\leq p,q,~ p+q \leq j\\0\leq r,i,~ r+i\leq
h\\0\leq p,r,~ p+r\leq u\\0\leq q,i, ~q+i\leq
w}}(-1)^{p+q+r+i}\hspace*{-0.1cm}
\left[\hspace*{-0.2cm}\begin{array}{c}u\hspace*{-0.06cm}+\hspace*{-0.06cm}j\hspace*{-0.06cm}+\hspace*{-0.06cm}m\hspace*{-0.06cm}+\hspace*{-0.06cm}r\hspace*{-0.06cm}+\hspace*{-0.06cm}2i\hspace*{-0.06cm}+\hspace*{-0.06cm}q\hspace*{-0.06cm}+\hspace*{-0.06cm}p\hspace*{-0.06cm}-\hspace*{-0.06cm}1 \\
p
\end{array}\hspace*{-0.2cm}\right]$\vskip2mm$\times\left[\hspace*{-0.2cm}\begin{array}{c}u\hspace*{-0.06cm}+\hspace*{-0.06cm}j\hspace*{-0.06cm}+\hspace*{-0.06cm}m\hspace*{-0.06cm}+\hspace*{-0.06cm}u\hspace*{-0.06cm}+\hspace*{-0.06cm}w\hspace*{-0.06cm}-\hspace*{-0.06cm}v\hspace*{-0.06cm}+\hspace*{-0.06cm}i\hspace*{-0.06cm}+\hspace*{-0.06cm}q\hspace*{-0.06cm}-\hspace*{-0.06cm}1 \\
q
\end{array}\hspace*{-0.2cm}\right]\hspace*{-0.1cm}\left[\hspace*{-0.2cm}\begin{array}{c}m\hspace*{-0.06cm}+\hspace*{-0.06cm}u\hspace*{-0.06cm}+\hspace*{-0.06cm}j\hspace*{-0.06cm}+\hspace*{-0.06cm}j\hspace*{-0.06cm}+\hspace*{-0.06cm}h\hspace*{-0.06cm}-\hspace*{-0.06cm}k\hspace*{-0.06cm}+\hspace*{-0.06cm}i\hspace*{-0.06cm}+\hspace*{-0.06cm}r\hspace*{-0.06cm}-\hspace*{-0.06cm}1 \\
r
\end{array}\hspace*{-0.2cm}\right]$\vskip2mm$\times\left[\hspace*{-0.2cm}\begin{array}{c}m\hspace*{-0.06cm}+\hspace*{-0.06cm}u\hspace*{-0.06cm}+\hspace*{-0.06cm}j+\hspace*{-0.06cm}j\hspace*{-0.06cm}+\hspace*{-0.06cm}h\hspace*{-0.08cm}-\hspace*{-0.06cm}k\hspace*{-0.06cm}+\hspace*{-0.06cm}u\hspace*{-0.06cm}+\hspace*{-0.06cm}w\hspace*{-0.06cm}-\hspace*{-0.06cm}v\hspace*{-0.06cm}+\hspace*{-0.06cm}i\hspace*{-0.06cm}-\hspace*{-0.06cm}1 \\
i
\end{array}\hspace*{-0.2cm}\right]$\vskip2mm$\times
e_{2}^{(h-r-i)}e_{1}^{(k)}e_{2}^{(j-p-q)}1_{(l-p-q-r-i,m+2p+2q+2r+2i)}f_{2}^{(u-p-r)}f_{1}^{(v)}f_{2}^{(w-q-i)},
$ \vskip2mm  $\mbox{if}~-m\leq u+j+(j+h-k)+(u+w-v),~~-l-m\geq
j+h+u+w,~~k\geq h+j,
~~v\geq u+w;\hfill(6)$\\

$\sum\limits_{\substack{0\leq z\leq k,v\\0\leq p,q,~ p+q \leq
j\\0\leq r,i,~ r+i+z\leq h\\0\leq p,r,~ p+r\leq u\\0\leq q,i,
~q+i+z\leq w}}(-1)^{p+q+r+i+z}\hspace*{-0.1cm}
\left[\hspace*{-0.2cm}\begin{array}{c}u\hspace*{-0.06cm}+\hspace*{-0.06cm}j\hspace*{-0.06cm}+\hspace*{-0.06cm}m\hspace*{-0.06cm}+\hspace*{-0.06cm}r\hspace*{-0.06cm}+\hspace*{-0.06cm}2i\hspace*{-0.06cm}+\hspace*{-0.06cm}q\hspace*{-0.06cm}+\hspace*{-0.06cm}z\hspace*{-0.06cm}+\hspace*{-0.06cm}p\hspace*{-0.06cm}-\hspace*{-0.06cm}1 \\
p
\end{array}\hspace*{-0.2cm}\right]$\vskip2mm$ \hspace*{-0.6cm}\times\left[\hspace*{-0.2cm}\begin{array}{c}u\hspace*{-0.06cm}+\hspace*{-0.06cm}j\hspace*{-0.06cm}+\hspace*{-0.06cm}m\hspace*{-0.06cm}+\hspace*{-0.06cm}u\hspace*{-0.06cm}+\hspace*{-0.06cm}w\hspace*{-0.06cm}-\hspace*{-0.06cm}v\hspace*{-0.06cm}+\hspace*{-0.06cm}z\hspace*{-0.06cm}+\hspace*{-0.06cm}i\hspace*{-0.06cm}+\hspace*{-0.06cm}q\hspace*{-0.06cm}-\hspace*{-0.06cm}1 \\
q
\end{array}\hspace*{-0.2cm}\right]\hspace*{-0.1cm}\left[\hspace*{-0.2cm}\begin{array}{c}m\hspace*{-0.06cm}+\hspace*{-0.06cm}u\hspace*{-0.06cm}+\hspace*{-0.06cm}j\hspace*{-0.06cm}+\hspace*{-0.06cm}(j\hspace*{-0.06cm}+\hspace*{-0.06cm}h\hspace*{-0.06cm}-\hspace*{-0.06cm}k)\hspace*{-0.06cm}+\hspace*{-0.06cm}z\hspace*{-0.06cm}+\hspace*{-0.06cm}i\hspace*{-0.06cm}+\hspace*{-0.06cm}r\hspace*{-0.06cm}-\hspace*{-0.06cm}1 \\
r
\end{array}\hspace*{-0.2cm}\right]$\vskip2mm$\hspace*{-0.6cm}\times\left[\hspace*{-0.2cm}\begin{array}{c}m\hspace*{-0.06cm}+\hspace*{-0.06cm}u\hspace*{-0.06cm}+\hspace*{-0.06cm}j+\hspace*{-0.06cm}(j\hspace*{-0.06cm}+\hspace*{-0.06cm}h\hspace*{-0.08cm}-\hspace*{-0.06cm}k)\hspace*{-0.06cm}+\hspace*{-0.06cm}(u\hspace*{-0.06cm}+\hspace*{-0.06cm}w\hspace*{-0.06cm}-\hspace*{-0.06cm}v)\hspace*{-0.06cm}+\hspace*{-0.06cm}z\hspace*{-0.06cm}+\hspace*{-0.06cm}i\hspace*{-0.06cm}-\hspace*{-0.06cm}1 \\
i
\end{array}\hspace*{-0.2cm}\right]\hspace*{-0.1cm}\left[\hspace*{-0.2cm}\begin{array}{c}l\hspace*{-0.06cm}+\hspace*{-0.06cm}m\hspace*{-0.06cm}+\hspace*{-0.06cm}j\hspace*{-0.06cm}+\hspace*{-0.06cm}h\hspace*{-0.06cm}+\hspace*{-0.06cm}u\hspace*{-0.06cm}+\hspace*{-0.06cm}w\hspace*{-0.06cm}+\hspace*{-0.06cm}z\hspace*{-0.06cm}-\hspace*{-0.06cm}1 \\
z
\end{array}\hspace*{-0.2cm}\right]$\vskip2mm
$\hspace*{-0.6cm}\times
e_{2}^{(h-r-i-z)}e_{1}^{(k-z)}e_{2}^{(j-p-q)}1_{(l-p-q-r-i+z,m+2p+2q+2r+2i+z)}
f_{2}^{(u-p-r)}f_{1}^{(v-z)}f_{2}^{(w-q-i-z)},$ \vskip2mm
$\mbox{if}~-l\geq v-u+k-j,~~-l-m\leq j+h+u+w,~~k\geq h+j, ~~v\geq u+w;\hfill(7)$\\

$e_{2}^{(h)}e_{1}^{(k)}e_{2}^{(j)}1_{(l,m)}f_{1}^{(u)}f_{2}^{(v)}f_{1}^{(w)},$\vskip2mm$
\mbox{if}~-l \geq u+k-j,~~-m \geq j+v-u,~~k\geq h+j,~~v\geq u+w;\hfill(8)$\\

$\sum\limits_{\substack{0\leq p \leq j,v}}(-1)^{p}
\hspace*{-0.1cm}\left[\hspace*{-0.1cm}\begin{array}{c}m+j+v-u+p-1 \\
p
\end{array}\hspace*{-0.1cm}\right]$
$\hspace*{-0.1cm}e_{2}^{(h)}e_{1}^{(k)}e_{2}^{(j-p)}1_{(l-p,m+2p)}f_{1}^{(u)}f_{2}^{(v-p)}f_{1}^{(w)},$
\vskip2mm $\mbox{if}~-l\geq u+k-j,~~v+j-u+(j+h-k)\leq -m\leq
v+j-u,~~k\geq h+j, ~~v\geq
u+w;\hfill(9)$\\

$\sum\limits_{\substack{0\leq p \leq k,u}}(-1)^{p}
\left[\hspace*{-0.1cm}\begin{array}{c}l+u+k-j+p-1 \\
p
\end{array}\hspace*{-0.1cm}\right]$
$\hspace*{-0.1cm}e_{2}^{(h)}e_{1}^{(k-p)}e_{2}^{(j)}1_{(l+2p,m-p)}f_{1}^{(u-p)}f_{2}^{(v)}f_{1}^{(w)},$
\vskip2mm $\mbox{if}~u+k-j+(u+w-v)\leq -l\leq u+k-j,~~-m\geq
v+j-u,~~k\geq h+j,~~ v\geq
u+w;\hfill(10)$\\

$\sum\limits_{\substack{0\leq p \leq j,v\\0\leq q\leq
k,u}}(-1)^{p+q}
\hspace*{-0.1cm}\left[\begin{array}{c}m+j+v-u+p-1 \\
p
\end{array}\right]\hspace*{-0.14cm}\left[\begin{array}{c}l+u+k-j+q-1 \\
q
\end{array}\right]$\vskip2mm
$\times
e_{2}^{(h)}e_{1}^{(k-q)}e_{2}^{(j-p)}1_{(l-p+2q,m+2p-q)}f_{1}^{(u-q)}f_{2}^{(v-p)}f_{1}^{(w)},$
\vskip2mm $\mbox{if}~u+k-j+(u+w-v)\leq -l\leq
u+k-j,~~v+j-u+(j+h-k)\leq -m\leq v+j-u,~~k\geq
h+j, ~~v\geq u+w;\hfill(11)$\\

$\sum\limits_{\substack{0\leq p\leq u\\0\leq q\leq w\\0\leq p+q \leq k}}(-1)^{p+q}\hspace*{-0.1cm}\left[\hspace*{-0.2cm}\begin{array}{c}u\hspace*{-0.05cm}+\hspace*{-0.05cm}l\hspace*{-0.05cm}+\hspace*{-0.05cm}k\hspace*{-0.05cm}-\hspace*{-0.05cm}j\hspace*{-0.05cm}+\hspace*{-0.05cm}q\hspace*{-0.05cm}+\hspace*{-0.05cm}p\hspace*{-0.05cm}-\hspace*{-0.05cm}1 \\
p
\end{array}\hspace*{-0.2cm}\right]\hspace*{-0.1cm}\left[\hspace*{-0.2cm}\begin{array}{c}u\hspace*{-0.05cm}+\hspace*{-0.05cm}l\hspace*{-0.05cm}+\hspace*{-0.05cm}k\hspace*{-0.05cm}-\hspace*{-0.05cm}j\hspace*{-0.05cm}+\hspace*{-0.05cm}u\hspace*{-0.05cm}+\hspace*{-0.05cm}w\hspace*{-0.05cm}-\hspace*{-0.05cm}v\hspace*{-0.05cm}+\hspace*{-0.05cm}q\hspace*{-0.05cm}-\hspace*{-0.05cm}1 \\
q
\end{array}\hspace*{-0.2cm}\right]$\vskip2mm$\times e_{2}^{(h)}e_{1}^{(k-p-q)}e_{2}^{(j)}1_{(l+2p+2q,m-p-q)}
f_{1}^{(u-p)}f_{2}^{(v)}f_{1}^{(w-q)},$ \vskip2mm
 $ \mbox{if}~-l-m \geq u+w+k, ~~-l \leq
u+k-j+(u+w-v),~~k\geq h+j,~~v\geq u+w;\hfill(12)$\\

$\sum\limits_{\substack{0\leq r\leq j,v\\0\leq p\leq u\\0\leq q,~q+r\leq w\\0\leq p+q+r \leq k}}(-1)^{p+q+r}\hspace*{-0.1cm}\left[\hspace*{-0.2cm}\begin{array}{c}u\hspace*{-0.05cm}+\hspace*{-0.05cm}l\hspace*{-0.05cm}+\hspace*{-0.05cm}k\hspace*{-0.05cm}-\hspace*{-0.05cm}j\hspace*{-0.05cm}+\hspace*{-0.05cm}r\hspace*{-0.05cm}+\hspace*{-0.05cm}q\hspace*{-0.05cm}+\hspace*{-0.05cm}p\hspace*{-0.05cm}-\hspace*{-0.05cm}1 \\
p
\end{array}\hspace*{-0.2cm}\right]$\vskip2mm$\times\left[\hspace*{-0.2cm}\begin{array}{c}u\hspace*{-0.05cm}+\hspace*{-0.05cm}l\hspace*{-0.05cm}+\hspace*{-0.05cm}k\hspace*{-0.05cm}-\hspace*{-0.05cm}j\hspace*{-0.05cm}+\hspace*{-0.05cm}u\hspace*{-0.05cm}+\hspace*{-0.05cm}w\hspace*{-0.05cm}-\hspace*{-0.05cm}v\hspace*{-0.05cm}+\hspace*{-0.05cm}r\hspace*{-0.05cm}+\hspace*{-0.05cm}q\hspace*{-0.05cm}-\hspace*{-0.05cm}1 \\
q
\end{array}\hspace*{-0.2cm}\right]\hspace*{-0.1cm}\left[\hspace*{-0.2cm}\begin{array}{c}u+w+l+m+k+r-1 \\
r
\end{array}\hspace*{-0.2cm}\right]$\vskip2mm$\times e_{2}^{(h)}e_{1}^{(k-p-q-r)}e_{2}^{(j-r)}1_{(l+2p+2q+r,m-p-q+r)}
f_{1}^{(u-p)}f_{2}^{(v-r)}f_{1}^{(w-q-r)},$\vskip2mm
$ \mbox{if}~-l-m \leq u+w+k,~~ -m \geq v-u+j,~~ k\geq h+j,~~v\geq u+w;\hfill(13)$\\

$\mathbf{Part}$ $\mathbf{(1.2)}$\\

$f_{2}^{(u)}f_{1}^{(v)}f_{2}^{(w)}1_{(l,m)}e_{2}^{(h)}e_{1}^{(k)}e_{2}^{(j)},$\vskip2mm$
\mbox{if}~-l \leq w-v+h-k,~~-m \leq -w-h,~~k\geq h+j,~~v\geq u+w;\hfill(1')$\\

$\sum\limits_{0\leq p \leq h,w}(-1)^{p}\left[\begin{array}{c}w-m+h+p-1 \\
p
\end{array}\right]f_{2}^{(u)}f_{1}^{(v)}f_{2}^{(w-p)}1_{(l+p,m-2p)}
e_{2}^{(h-p)}e_{1}^{(k)}e_{2}^{(j)},$\vskip2mm  $\mbox{if}~-l \leq
w-v+h-k,~~-w-h\leq -m
\leq -w-h+v-u-w,~~-m\leq -w-h+(k-j-h), ~~k\geq h+j,~~v\geq u+w;\hfill(2')$\\

$\sum\limits_{\substack{0\leq p \leq h\\0\leq q\leq j\\0\leq p+q\leq w}}(-1)^{p+q}\hspace*{-0.1cm}\left[\hspace*{-0.15cm}\begin{array}{c}w-m+h+q+p-1 \\
p
\end{array}\hspace*{-0.15cm}\right]\hspace*{-0.2cm}\left[\hspace*{-0.15cm}\begin{array}{c}w-m+h+j+h-k+q-1 \\
p
\end{array}\hspace*{-0.15cm}\right]$\vskip2mm$\times f_{2}^{(u)}f_{1}^{(v)}f_{2}^{(w-p-q)}1_{(l+p+q,m-2p-2q)}
e_{2}^{(h-p)}e_{1}^{(k)}e_{2}^{(j-q)},$ \vskip2mm $\mbox{if}~-l \leq
w-v+h-k,~~ -m \leq
-w-h+v-u-w,~~-m\geq -w-h+(k-j-h), ~~k\geq h+j,~~v\geq u+w;\hfill(3')$\\

$\sum\limits_{\substack{0\leq p \leq w\\0\leq q\leq u\\0\leq p+q \leq h}}(-1)^{p+q}\hspace*{-0.1cm}\left[\hspace*{-0.2cm}\begin{array}{c}w-m+h+q+p-1 \\
p
\end{array}\hspace*{-0.2cm}\right]\hspace*{-0.2cm}\left[\hspace*{-0.2cm}\begin{array}{c}w-m+h+u+w-v+q-1 \\
q
\end{array}\hspace*{-0.2cm}\right]$\vskip2mm$\times
f_{2}^{(u-q)}f_{1}^{(v)}f_{2}^{(w-p)}
1_{(l+p+q,m-2p-2q)}e_{2}^{(h-p-q)}e_{1}^{(k)}e_{2}^{(j)},$ \vskip2mm
$\mbox{if}~-l\leq w-v+h-k,~~-m \geq -w-h+v-u-w,~~-m\leq
-w-h+(k-j-h),~~k\geq h+j,~~
v\geq u+w;\hfill(4')$\\

$\sum\limits_{\substack{0\leq p\leq w\\0\leq q\leq u\\0\leq p+q \leq h\\0\leq r \leq j\\0\leq p+r\leq w}}
(-1)^{p+q+r}\left[\hspace*{-0.2cm}\begin{array}{c}w\hspace*{-0.03cm}-\hspace*{-0.03cm}m\hspace*{-0.03cm}+\hspace*{-0.03cm}h\hspace*{-0.03cm}+\hspace*{-0.03cm}r\hspace*{-0.03cm}+\hspace*{-0.03cm}q\hspace*{-0.03cm}+\hspace*{-0.03cm}p\hspace*{-0.03cm}-\hspace*{-0.03cm}1 \\
p
\end{array}\hspace*{-0.2cm}\right]$\vskip2mm$\times\left[\hspace*{-0.2cm}\begin{array}{c}w\hspace*{-0.03cm}-\hspace*{-0.03cm}m\hspace*{-0.03cm}+\hspace*{-0.03cm}h\hspace*{-0.03cm}+\hspace*{-0.03cm}u\hspace*{-0.03cm}+\hspace*{-0.03cm}w\hspace*{-0.03cm}-\hspace*{-0.03cm}v\hspace*{-0.03cm}+\hspace*{-0.03cm}q\hspace*{-0.03cm}-\hspace*{-0.03cm}1 \\
q
\end{array}\hspace*{-0.2cm}\right]\hspace*{-0.1cm}\left[\hspace*{-0.2cm}\begin{array}{c}w\hspace*{-0.03cm}-\hspace*{-0.03cm}m\hspace*{-0.03cm}+\hspace*{-0.03cm}h\hspace*{-0.03cm}+\hspace*{-0.03cm}(j\hspace*{-0.03cm}+\hspace*{-0.03cm}h\hspace*{-0.03cm}-\hspace*{-0.03cm}k)\hspace*{-0.03cm}+\hspace*{-0.03cm}r\hspace*{-0.03cm}-\hspace*{-0.03cm}1 \\
r
\end{array}\hspace*{-0.2cm}\right]$\vskip2mm
$\times f_{2}^{(u-q)}f_{1}^{(v)}f_{2}^{(w-p-r)}
1_{(l+p+q+r,m-2p-2q-2r)}e_{2}^{(h-p-q)}e_{1}^{(k)}e_{2}^{(j-r)},$
\vskip2mm $\mbox{if}~-l\leq w-v+h-k,~~-m \geq
-w-h+v-u-w,~~-w-h+(k-j-h)\leq -m \leq -w-h+(k-j-h)+(v-u-w),~~k\geq
h+j,~~
v\geq u+w;\hfill(5')$\\

$\sum\limits_{\substack{0\leq p,q,~p+q \leq h\\0\leq r,i,~r+i \leq
j\\0\leq q,i,~q+i\leq u\\0\leq p,r,~p+r\leq
w}}(-1)^{p+q+r+i}\left[\hspace*{-0.2cm}\begin{array}{c}w\hspace*{-0.06cm}+\hspace*{-0.06cm}h\hspace*{-0.06cm}-\hspace*{-0.06cm}m\hspace*{-0.06cm}+\hspace*{-0.06cm}r\hspace*{-0.06cm}+\hspace*{-0.06cm}2i\hspace*{-0.06cm}+\hspace*{-0.06cm}q\hspace*{-0.06cm}+\hspace*{-0.06cm}p\hspace*{-0.06cm}-\hspace*{-0.06cm}1 \\
p
\end{array}\hspace*{-0.2cm}\right]$\vskip2mm$\times\left[\hspace*{-0.2cm}\begin{array}{c}w\hspace*{-0.06cm}+\hspace*{-0.06cm}h\hspace*{-0.06cm}-\hspace*{-0.06cm}m\hspace*{-0.06cm}+\hspace*{-0.06cm}u\hspace*{-0.06cm}+\hspace*{-0.06cm}w\hspace*{-0.06cm}-\hspace*{-0.06cm}v\hspace*{-0.06cm}+\hspace*{-0.06cm}i\hspace*{-0.06cm}+\hspace*{-0.06cm}q\hspace*{-0.06cm}-\hspace*{-0.06cm}1 \\
q
\end{array}\hspace*{-0.2cm}\right]\hspace*{-0.1cm}\left[\hspace*{-0.2cm}\begin{array}{c}w\hspace*{-0.06cm}-\hspace*{-0.06cm}m\hspace*{-0.06cm}+\hspace*{-0.06cm}h\hspace*{-0.06cm}+\hspace*{-0.06cm}(j\hspace*{-0.06cm}+\hspace*{-0.06cm}h\hspace*{-0.06cm}-\hspace*{-0.06cm}k)\hspace*{-0.06cm}+\hspace*{-0.06cm}i\hspace*{-0.06cm}+\hspace*{-0.06cm}r\hspace*{-0.06cm}-\hspace*{-0.06cm}1 \\
r
\end{array}\hspace*{-0.2cm}\right]$\vskip2mm$\times\left[\hspace*{-0.2cm}\begin{array}{c}w\hspace*{-0.06cm}-\hspace*{-0.06cm}m\hspace*{-0.06cm}+\hspace*{-0.06cm}h\hspace*{-0.06cm}+\hspace*{-0.06cm}(j\hspace*{-0.06cm}+\hspace*{-0.06cm}h\hspace*{-0.06cm}-\hspace*{-0.06cm}k)\hspace*{-0.06cm}+\hspace*{-0.06cm}(u\hspace*{-0.06cm}+\hspace*{-0.06cm}w\hspace*{-0.06cm}-\hspace*{-0.06cm}v)\hspace*{-0.06cm}+\hspace*{-0.06cm}i\hspace*{-0.06cm}-\hspace*{-0.06cm}1 \\
i
\end{array}\hspace*{-0.2cm}\right]$\vskip2mm$\times f_{2}^{(u-q-i)}f_{1}^{(v)}f_{2}^{(w-p-r)}
1_{(l+p+q+r+i,m-2p-2q-2r-2i)}e_{2}^{(h-p-q)}e_{1}^{(k)}e_{2}^{(j-r-i)},$
\vskip2mm $\mbox{if}~-m\geq -w-h+(k-j-h)+(v-u-w),~~-l-m\leq
-j-h-u-w,~~k\geq h+j,~~
v\geq u+w;\hfill(6')$\\

$\sum\limits_{\substack{0\leq z\leq k,v\\0\leq p,q,~p+q \leq
h\\0\leq r,i,~r+i+z \leq j\\0\leq q,i,~q+i+z\leq u\\0\leq
p,r,~p+r\leq
w}}(-1)^{p+q+r+i+z}\left[\hspace*{-0.2cm}\begin{array}{c}w\hspace*{-0.06cm}+\hspace*{-0.06cm}h\hspace*{-0.06cm}-\hspace*{-0.06cm}m\hspace*{-0.06cm}+\hspace*{-0.06cm}r\hspace*{-0.06cm}+\hspace*{-0.06cm}2i\hspace*{-0.06cm}+\hspace*{-0.06cm}q\hspace*{-0.06cm}+\hspace*{-0.06cm}z\hspace*{-0.06cm}+\hspace*{-0.06cm}p\hspace*{-0.06cm}-\hspace*{-0.06cm}1 \\
p
\end{array}\hspace*{-0.2cm}\right]$\vskip2mm$\hspace*{-0.6cm}\times\left[\hspace*{-0.2cm}\begin{array}{c}w\hspace*{-0.06cm}+\hspace*{-0.06cm}h\hspace*{-0.06cm}-\hspace*{-0.06cm}m\hspace*{-0.06cm}+\hspace*{-0.06cm}u\hspace*{-0.06cm}+\hspace*{-0.06cm}w\hspace*{-0.06cm}-\hspace*{-0.06cm}v\hspace*{-0.06cm}+\hspace*{-0.06cm}z\hspace*{-0.06cm}+\hspace*{-0.06cm}i\hspace*{-0.06cm}+\hspace*{-0.06cm}q\hspace*{-0.06cm}-\hspace*{-0.06cm}1 \\
q
\end{array}\hspace*{-0.2cm}\right]\hspace*{-0.1cm}\left[\hspace*{-0.2cm}\begin{array}{c}w\hspace*{-0.06cm}-\hspace*{-0.06cm}m\hspace*{-0.06cm}+\hspace*{-0.06cm}h\hspace*{-0.06cm}+\hspace*{-0.06cm}(j\hspace*{-0.06cm}+\hspace*{-0.06cm}h\hspace*{-0.06cm}-\hspace*{-0.06cm}k)\hspace*{-0.06cm}+\hspace*{-0.06cm}z\hspace*{-0.06cm}+\hspace*{-0.06cm}i\hspace*{-0.06cm}+\hspace*{-0.06cm}r\hspace*{-0.06cm}-\hspace*{-0.06cm}1 \\
r
\end{array}\hspace*{-0.2cm}\right]$\vskip2mm$\hspace*{-0.6cm}\times\left[\hspace*{-0.2cm}\begin{array}{c}w\hspace*{-0.06cm}-\hspace*{-0.06cm}m\hspace*{-0.06cm}+\hspace*{-0.06cm}h\hspace*{-0.06cm}+\hspace*{-0.06cm}j\hspace*{-0.06cm}+\hspace*{-0.06cm}h\hspace*{-0.06cm}-\hspace*{-0.06cm}k\hspace*{-0.06cm}+\hspace*{-0.06cm}u\hspace*{-0.06cm}+\hspace*{-0.06cm}w\hspace*{-0.06cm}-\hspace*{-0.06cm}v\hspace*{-0.06cm}+\hspace*{-0.06cm}z\hspace*{-0.06cm}+\hspace*{-0.06cm}i\hspace*{-0.06cm}-\hspace*{-0.06cm}1 \\
i
\end{array}\hspace*{-0.2cm}\right]\hspace*{-0.1cm}\left[\hspace*{-0.2cm}\begin{array}{c}-l\hspace*{-0.06cm}-\hspace*{-0.06cm}m\hspace*{-0.06cm}+\hspace*{-0.06cm}j\hspace*{-0.06cm}+\hspace*{-0.06cm}h\hspace*{-0.06cm}+\hspace*{-0.06cm}u\hspace*{-0.06cm}+\hspace*{-0.06cm}w\hspace*{-0.06cm}+\hspace*{-0.06cm}z\hspace*{-0.06cm}-\hspace*{-0.06cm}1 \\
z
\end{array}\hspace*{-0.2cm}\right]$\vskip2mm$\hspace*{-0.6cm}\times f_{2}^{(u-q-i-z)}f_{1}^{(v-z)}f_{2}^{(w-p-r)}
1_{(l+p+q+r+i-z,m-2p-2q-2r-2i-z)}e_{2}^{(h-p-q)}e_{1}^{(k-z)}e_{2}^{(j-r-i-z)},$
\vskip2mm $\mbox{if}~-l\leq w-v+h-k,~~-l-m\geq -j-h-u-w,~~k\geq h+j, ~~v\geq u+w;\hfill(7')$\\

$f_{1}^{(u)}f_{2}^{(v)}f_{1}^{(w)}1_{(l,m)}e_{2}^{(h)}e_{1}^{(k)}e_{2}^{(j)},$\vskip2mm$
\mbox{if}~-l \leq h-k-w,~~-m \leq w-v-h,~~k\geq h+j,~~v\geq u+w;\hfill(8')$\\

$\sum\limits_{\substack{0\leq p \leq h,v}}\hspace*{-0.1cm}(-1)^{p}\hspace*{-0.1cm}\left[\hspace*{-0.17cm}\begin{array}{c}-m+h+v-w+p-1 \\
p
\end{array}\hspace*{-0.17cm}\right]$
$\hspace*{-0.13cm}f_{1}^{(u)}f_{2}^{(v-p)}f_{1}^{(w)}
1_{(l+p,m-2p)}e_{2}^{(h-p)}e_{1}^{(k)}e_{2}^{(j)},$ \vskip2mm
$\mbox{if}~-l\leq -w+h-k, ~~-h+w-v\leq -m \leq
-h+w-v+(k-j-h),~~k\geq h+j,~~
v\geq u+w;\hfill(9')$\\

$\sum\limits_{\substack{0\leq p \leq k,w}}\hspace*{-0.1cm}(-1)^{p}\hspace*{-0.1cm}\left[\hspace*{-0.14cm}\begin{array}{c}-l+w+k-h+p-1 \\
p
\end{array}\hspace*{-0.14cm}\right]$
$\hspace*{-0.13cm}f_{1}^{(u)}f_{2}^{(v)}f_{1}^{(w-p)}
1_{(l-2p,m+p)}e_{2}^{(h)}e_{1}^{(k-p)}e_{2}^{(j)},$ \vskip2mm
$\mbox{if}~-w+h-k\leq -l\leq -w+h-k+(v-u-w), ~~-m \leq
-h+w-v,~~k\geq h+j,~~
v\geq u+w;\hfill(10')$\\

$\sum\limits_{\substack{0\leq p \leq h,v\\0\leq q\leq
k,w}}(-1)^{p+q}\left[\hspace*{-0.17cm}\begin{array}{c}-m+h+v-w+p-1 \\
p
\end{array}\hspace*{-0.17cm}\right]\hspace*{-0.1cm}\left[\hspace*{-0.14cm}\begin{array}{c}-l+w+k-h+p-1 \\
p
\end{array}\hspace*{-0.14cm}\right]$\vskip2mm$\times f_{1}^{(u)}f_{2}^{(v-p)}f_{1}^{(w-q)}
1_{(l+p-2q,m-2p+q)}e_{2}^{(h-p)}e_{1}^{(k-q)}e_{2}^{(j)},$ \vskip2mm
$\mbox{if}~-w+h-k\leq -l\leq -w+h-k+(v-u-w),~~ -h+w-v\leq -m \leq
-h+w-v+(k-j-h),~~k\geq h+j, ~~v\geq u+w;\hfill(11')$\\

$\sum\limits_{\substack{0\leq p \leq w\\0\leq q\leq u\\0\leq p+q \leq k}}(-1)^{p+q}\hspace*{-0.1cm}\left[\hspace*{-0.2cm}\begin{array}{c}w\hspace*{-0.05cm}-\hspace*{-0.05cm}l\hspace*{-0.05cm}+\hspace*{-0.05cm}k\hspace*{-0.05cm}-\hspace*{-0.05cm}h\hspace*{-0.05cm}+\hspace*{-0.05cm}q\hspace*{-0.05cm}+\hspace*{-0.05cm}p\hspace*{-0.05cm}-\hspace*{-0.05cm}1 \\
p
\end{array}\hspace*{-0.2cm}\right]\hspace*{-0.1cm}\left[\hspace*{-0.2cm}\begin{array}{c}w\hspace*{-0.05cm}-\hspace*{-0.05cm}l\hspace*{-0.05cm}+\hspace*{-0.05cm}k\hspace*{-0.05cm}-\hspace*{-0.05cm}h\hspace*{-0.05cm}+\hspace*{-0.05cm}u+\hspace*{-0.05cm}w\hspace*{-0.05cm}-\hspace*{-0.05cm}v\hspace*{-0.05cm}+\hspace*{-0.05cm}q-\hspace*{-0.05cm}1 \\
q
\end{array}\hspace*{-0.2cm}\right]$\vskip2mm$\times f_{1}^{(u-q)}f_{2}^{(v)}f_{1}^{(w-p)}1_{(l-2p-2q,m+p+q)}
e_{2}^{(h)}e_{1}^{(k-p-q)}e_{2}^{(j)},$ \vskip2mm $\mbox{if}~ -l-m
\leq
-u-w-k, ~~-l \geq -w+h-k+(v-u-w), ~~k\geq  h+j, ~~v\geq u+w;\hfill(12')$\\

$\sum\limits_{\substack{0\leq r\leq h,v\\0\leq p \leq w\\0\leq q,~q+r\leq u\\0\leq p+q+r \leq k}}(-1)^{p+q+r}\hspace*{-0.1cm}\left[\hspace*{-0.2cm}\begin{array}{c}w\hspace*{-0.05cm}-\hspace*{-0.05cm}l\hspace*{-0.05cm}+\hspace*{-0.05cm}k\hspace*{-0.05cm}-\hspace*{-0.05cm}h\hspace*{-0.05cm}+\hspace*{-0.05cm}r\hspace*{-0.05cm}+\hspace*{-0.05cm}q\hspace*{-0.05cm}+\hspace*{-0.05cm}p\hspace*{-0.05cm}-\hspace*{-0.05cm}1 \\
p
\end{array}\hspace*{-0.2cm}\right]$\vskip2mm$\times\left[\hspace*{-0.2cm}\begin{array}{c}w\hspace*{-0.05cm}-\hspace*{-0.05cm}l\hspace*{-0.05cm}+\hspace*{-0.05cm}k\hspace*{-0.05cm}-\hspace*{-0.05cm}h\hspace*{-0.05cm}+\hspace*{-0.05cm}u+\hspace*{-0.05cm}w\hspace*{-0.05cm}-\hspace*{-0.05cm}v\hspace*{-0.05cm}+\hspace*{-0.05cm}q-\hspace*{-0.05cm}1 \\
q
\end{array}\hspace*{-0.2cm}\right]\hspace*{-0.1cm}\left[\hspace*{-0.2cm}\begin{array}{c}u+w-l-m+k+r-1 \\
r
\end{array}\hspace*{-0.2cm}\right]$\vskip2mm$\times f_{1}^{(u-q-r)}f_{2}^{(v-r)}f_{1}^{(w-p)}1_{(l-2p-2q-r,m+p+q-r)}
e_{2}^{(h-r)}e_{1}^{(k-p-q-r)}e_{2}^{(j)},$ \vskip2mm $\mbox{if}~
-l-m \geq
-u-w-k,~~ -m\leq w-v-h,~~ k\geq  h+j, ~~v\geq u+w.\hfill(13')$\\

$\mathbf{Part}$ $\mathbf{(2)}$ Using the symmetries of the indices {\rm 1} and {\rm 2}, we can also write the other {\rm 26} elements and we will omit them here.
\end{theorem}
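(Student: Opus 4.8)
The plan is to realise every canonical basis element as $b\lozenge_{\zeta}b''$ for a suitable pair $b,b''\in\mathbf{B}$ and weight $\zeta=(l,m)\in X$, and then to verify the thirteen explicit formulas of Part (1.1) by the characterisation of Theorem 2.2. By Lemma 2.1 the canonical basis $\mathbf{B}$ of $\mathbf{f}$ consists of the monomials $\theta_2^{(h)}\theta_1^{(k)}\theta_2^{(j)}$ and $\theta_1^{(h)}\theta_2^{(k)}\theta_1^{(j)}$ with $k\geq h+j$; under the maps $+$ and $-$ these yield precisely the $e$-word $E=e_2^{(h)}e_1^{(k)}e_2^{(j)}$ and the $f$-word $F$ occurring in each displayed element, while the standing inequalities $k\geq h+j$ and $v\geq u+w$ are exactly the requirements that $b$ and $b''$ be canonical. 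Before computing I would cut the work down using two symmetries: the anti-automorphism $\sigma$ of Theorem 2.3 sends $E\,1_{\zeta}\,F$ to $\sigma(F)\,1_{-\zeta}\,\sigma(E)$ and so carries Part (1.1) bijectively onto Part (1.2) after relabelling the indices; and the Dynkin diagram automorphism of $A_2$ interchanging $1$ and $2$ (hence $e_1\leftrightarrow e_2$, $f_1\leftrightarrow f_2$, $1_{(l,m)}\leftrightarrow 1_{(m,l)}$) preserves $\dot{\mathbf{B}}$ and produces Part (2). Thus it suffices to treat Part (1.1), i.e.\ the elements whose $e$-word is $e_2^{(h)}e_1^{(k)}e_2^{(j)}$.

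For fixed $b,b''$ the displayed formulas are organised according to the chamber in which the weight $\zeta$ lies: when $\zeta$ is deep in its region (cases (1) and (8)) the element is the bare monomial $E\,1_{\zeta}\,F$, and as $\zeta$ approaches the walls (cases (2)--(7) and (9)--(13)) correction terms appear, indexed by one up to five summation variables and weighted by Gaussian binomial coefficients. I would first check that the stated inequalities on $(l,m)$ are exhaustive and agree on their common boundaries, so that each $\zeta$ is assigned exactly one formula. Writing $y$ for the candidate in a given chamber, I then verify the two defining properties of $b\lozenge_{\zeta}b''$ from Theorem 2.2. That $y\in\dot{\mathbf{U}}_{\mathcal{A}}1_{\zeta}$ with $\bar{y}=y$ is immediate, since the divided powers $e_i^{(n)},f_i^{(n)}$, the idempotents $1_{\lambda}$, and all Gaussian binomials are bar-invariant, so every summand, and hence $y$, is bar-invariant. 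It remains to show that $y(\xi_{(-s,-t)}\otimes\eta_{(a,b)})$ is the tensor-product canonical basis element $(b\lozenge b'')_{(s,t),(a,b)}$ of Theorem 2.1 for all admissible $(s,t),(a,b)$ with $(a,b)-(s,t)=\zeta$.

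The computation of $y(\xi\otimes\eta)$ is the technical heart. Since $\xi_{(-s,-t)}$ is a lowest weight vector, $f_i\xi=0$, so by the coproduct formula (b) the whole $f$-word acts only on the second factor and $F(\xi\otimes\eta)=\xi\otimes b''^{-}\eta$; applying $E$ through the coproduct formula (a) together with the commutation relations (c)--(e) then distributes the $e$'s over both factors. The single term in which all of $E$ remains on the first factor gives the leading vector $b^{+}\xi\otimes b''^{-}\eta$ with coefficient $1$, and every other term is strictly lower in the partial order (some $e$'s pass to the second factor, lowering the $\mathrm{tr}$-degree of both entries). These cross-terms carry $v$-power weights $v^{a'a''+a''\langle\alpha_i^{\vee},\lambda'\rangle}$ from the coproduct and Gaussian binomials from the $e_if_i$-commutations in (c); because the two factors are finite dimensional, the relations $e_i^{(n)}\xi$ and $f_i^{(n)}\eta$ truncate near the walls, and it is exactly this truncation that spoils the leading-term condition for the bare monomial. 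The role of the prescribed corrections is to cancel the offending contributions so that, in each chamber, every non-leading coefficient lands in $v^{-1}\mathbb{Z}[v^{-1}]$.

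The main obstacle, which I expect to occupy Lemma 4.1 and most of Section 4, is proving the $q$-binomial identities (of Pascal and Vandermonde type) that effect this cancellation: one must collapse the multiple correction sums against the coproduct $v$-weights and confirm both that the diagonal coefficient equals $1$ and that all off-diagonal coefficients lie in $v^{-1}\mathbb{Z}[v^{-1}]$. Once this leading-term estimate is in hand, bar-invariance of $y$ forces $y(\xi\otimes\eta)$ to be $\Psi$-invariant: indeed $\xi\otimes\eta$ is fixed by $\Psi$ because $\Theta_{\nu}$ annihilates it for $\nu\neq0$ (the factor in $U^-_{-\nu}$ kills the lowest weight $\xi$), and $\Psi(u\cdot x)=\bar{u}\cdot\Psi(x)$ by the intertwining property $\Delta(u)\Theta=\Theta\,\overline{\Delta(\bar u)}$ defining $\Theta$. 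By the uniqueness in Theorem 2.1(a), $y(\xi\otimes\eta)$ is therefore the canonical basis element of the tensor product, and Theorem 2.2 identifies $y$ with $b\lozenge_{\zeta}b''$. Applying the symmetries $\sigma$ and $1\leftrightarrow2$ then yields Parts (1.2) and (2), completing the list.
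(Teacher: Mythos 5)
Your proposal follows essentially the same route as the paper: reduce Part (1.2) to Part (1.1) via $\sigma$ and Part (2) via the diagram automorphism, then verify each candidate against Theorem 2.2 by showing it is bar-invariant (hence its image is $\Psi$-fixed) and that its image in every tensor product $V(-s\omega_1-t\omega_2)\otimes V(a\omega_1+b\omega_2)$ satisfies the leading-term condition of Theorem 2.1, which is checked by letting the $f$-word act on the second factor, pushing the $e$-word through the coproduct, and collapsing the correction sums with $q$-binomial identities. This is exactly the paper's argument; the work you defer (the identities of Lemma 4.1 and the case-by-case degree estimates $A,\mathbb{B},\mathbb{C}\le 0$ with equality only at the leading term) is precisely where the paper's Section 4 spends its effort.
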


\section{The proof of Theorem 3.1} This section is devoted to the proof of Theorem 3.1. We need some
combinatorial identities, which will be used in the proof of Theorem
3.1.

\begin{lemma}
$(a)$ Assume that $n,r\in \mathbb{N},~~m\in \mathbb{Z}.$ We have $$\left[\begin{array}{c}m+n \\
r
\end{array}\right]=\sum\limits_{0\leq t\leq n,r}v^{t(m+n)-nr}\left[\begin{array}{c}m \\
r-t
\end{array}\right]\left[\begin{array}{c}n \\
t
\end{array}\right].$$

$(b)$ Assume that $m\geq k \geq 0,~~\delta \in \mathbb{N}.$ We have
$$\sum\limits_{0\leq i\leq \delta}(-1)^{i}\left[\begin{array}{c}k+i-1 \\
i
\end{array}\right]\left[\begin{array}{c}m \\
\delta-i
\end{array}\right]v^{i(m-k)}=\left[\begin{array}{c}m-k \\
\delta
\end{array}\right]v^{-k\delta}.$$

$(c)$ Assume that $a\geq c\geq 0,~~u,r\in \mathbb{N},~~b\in
\mathbb{Z}.$ We have
$$\sum\limits_{0\leq f \leq u}(-1)^{f}\left[\begin{array}{c}a-c+f-1 \\
f
\end{array}\right]\left[\begin{array}{c}b+r-f \\
r
\end{array}\right]\left[\begin{array}{c}a+u \\
u-f
\end{array}\right]v^{f(u+c-r)}$$$$=\sum\limits_{0\leq \delta\leq u,r}v^{\delta b+(u-\delta)(c-a-r)}\left[\begin{array}{c}b+r-u \\
r-\delta
\end{array}\right]\left[\begin{array}{c}c+u-\delta \\
u-\delta
\end{array}\right]\left[\begin{array}{c}a+u \\
\delta
\end{array}\right].$$
\end{lemma}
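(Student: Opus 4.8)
\emph{Strategy.} All three identities are avatars of the quantum Vandermonde convolution, so my plan is to prove (a) from scratch, then bootstrap (b) as its negative-parameter form and finally funnel the substantive identity (c) through (a) and (b). The engine throughout is the pair of Pascal recurrences
\[
\left[\begin{array}{c}a\\b\end{array}\right]=v^{a-b}\left[\begin{array}{c}a-1\\b-1\end{array}\right]+v^{-b}\left[\begin{array}{c}a-1\\b\end{array}\right]=v^{-(a-b)}\left[\begin{array}{c}a-1\\b-1\end{array}\right]+v^{b}\left[\begin{array}{c}a-1\\b\end{array}\right],
\]
which follow from the additivity $[a]=v^{a-b}[b]+v^{-b}[a-b]$ of quantum integers, together with the negation rule $\left[\begin{array}{c}a\\t\end{array}\right]=(-1)^{t}\left[\begin{array}{c}-a+t-1\\t\end{array}\right]$ recalled in Section 2.

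For (a) I would induct on $n$. The case $n=0$ is immediate, since only the $t=0$ term survives. For the inductive step I expand $\left[\begin{array}{c}n\\t\end{array}\right]$ by the first Pascal recurrence, split the sum into two pieces, reindex the piece carrying $\left[\begin{array}{c}n-1\\t-1\end{array}\right]$ by $t\mapsto t+1$, and recognise each resulting sum as a power of $v$ times the right-hand side of (a) evaluated at $(n-1,r)$ and at $(n-1,r-1)$; the induction hypothesis then identifies them with $\left[\begin{array}{c}m+n-1\\r\end{array}\right]$ and $\left[\begin{array}{c}m+n-1\\r-1\end{array}\right]$, and one final application of the Pascal recurrence to $\left[\begin{array}{c}m+n\\r\end{array}\right]$ closes the argument. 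Identity (b) is then the negative-parameter incarnation of (a): using the negation rule I rewrite $(-1)^{i}\left[\begin{array}{c}k+i-1\\i\end{array}\right]=\left[\begin{array}{c}-k\\i\end{array}\right]$, so the left side becomes $\sum_{i}v^{i(m-k)}\left[\begin{array}{c}m\\\delta-i\end{array}\right]\left[\begin{array}{c}-k\\i\end{array}\right]$, which is formally (a) with $n$ replaced by $-k$. Because (a) was proved only for $n\in\mathbb{N}$, I would secure this case by an independent induction on $k$: the base case $k=0$ forces $i=0$ and gives $\left[\begin{array}{c}m\\\delta\end{array}\right]$, and the step expands $\left[\begin{array}{c}k+i-1\\i\end{array}\right]$ by Pascal and matches against the Pascal recurrence for $\left[\begin{array}{c}m-k\\\delta\end{array}\right]$ on the right, the hypothesis $m\ge k\ge0$ keeping all entries in range.

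The real content is (c). First I rewrite the sign factor as $(-1)^{f}\left[\begin{array}{c}a-c+f-1\\f\end{array}\right]=\left[\begin{array}{c}c-a\\f\end{array}\right]$. Next I expand the middle factor by (a) in the splitting $b+r-f=(b+r-u)+(u-f)$, legitimate since $m=b+r-u\in\mathbb{Z}$ and $n=u-f\in\mathbb{N}$ for $0\le f\le u$; this introduces a new index $\delta$ and the factor $\left[\begin{array}{c}b+r-u\\r-\delta\end{array}\right]$ already present on the right. After interchanging the two summations (putting $\delta$ outside and $f$ inside) I apply the trinomial revision
\[
\left[\begin{array}{c}a+u\\u-f\end{array}\right]\left[\begin{array}{c}u-f\\\delta\end{array}\right]=\left[\begin{array}{c}a+u\\\delta\end{array}\right]\left[\begin{array}{c}a+u-\delta\\u-\delta-f\end{array}\right]
\]
to extract $\left[\begin{array}{c}a+u\\\delta\end{array}\right]$ from the inner sum. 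What remains is $\sum_{f}v^{f(c+u-\delta)}\left[\begin{array}{c}c-a\\f\end{array}\right]\left[\begin{array}{c}a+u-\delta\\u-\delta-f\end{array}\right]$, a Vandermonde sum with the \emph{negative} upper parameter $c-a\le0$ — which is exactly why (b), not merely (a), is indispensable here. Applying (b) with $k=a-c$, $m=a+u-\delta$ and outer index $u-\delta$ collapses this to $v^{-(a-c)(u-\delta)}\left[\begin{array}{c}c+u-\delta\\u-\delta\end{array}\right]$, and collecting the powers of $v$ produced by the Vandermonde expansion, the revision, and (b) gives precisely $v^{\delta b+(u-\delta)(c-a-r)}$, so the $\delta$-summand reproduces the right side term by term.

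The main obstacle is the exponent bookkeeping in (c): one must check that three separately generated powers of $v$ combine to the single clean exponent $\delta b+(u-\delta)(c-a-r)$, and that the $\delta$-range emerging from the interchanged double sum collapses to $0\le\delta\le u,r$. A secondary point demanding care is verifying the hypotheses of (b) at the instant of application, namely $a-c\ge0$ and $a+u-\delta\ge a-c$; these are forced by $a\ge c\ge0$ and $\delta\le u$, and this is exactly where the standing assumption $a\ge c$ is consumed.
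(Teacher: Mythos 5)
Your proposal is correct, and for part (c) it takes a genuinely different route from the paper. For (a) and (b) the paper simply cites [L5, 1.3(e)] and [X1, X2] respectively, whereas you supply the standard inductions; that difference is immaterial. The substantive divergence is in (c): the paper proves it by induction on $a+u+r$, with base cases $a=0$, $u=0$, $r=0$ (reduced to (a), a triviality, and (b) respectively) and an inductive step in which two Pascal recurrences split the left side into four sums, two of which cancel and two of which are handled by the induction hypothesis before being reassembled by a final Pascal recurrence on $\left[\begin{smallmatrix}a+u\\ \delta\end{smallmatrix}\right]$. You instead give a direct, non-inductive derivation: rewrite the sign factor as $\left[\begin{smallmatrix}c-a\\ f\end{smallmatrix}\right]$, expand $\left[\begin{smallmatrix}b+r-f\\ r\end{smallmatrix}\right]$ by (a) along the splitting $(b+r-u)+(u-f)$, interchange the sums, extract $\left[\begin{smallmatrix}a+u\\ \delta\end{smallmatrix}\right]$ by trinomial revision, and collapse the inner $f$-sum by (b) with $k=a-c$, $m=a+u-\delta$ and outer index $u-\delta$. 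I verified the bookkeeping: the exponents combine to $\delta b+(u-\delta)(c-a-r)$ as claimed, the hypotheses of (b) hold since $a\ge c\ge 0$ and $\delta\le u$, the interchange of summation is legitimate because terms with $\delta>u-f$ vanish, and trinomial revision for Gaussian binomials is a pure factorial identity needing no power of $v$. Your argument buys transparency — it derives the right-hand side rather than verifying a guessed answer — at the cost of invoking one identity the paper never states (trinomial revision, though it is elementary); the paper's induction is more self-contained in its toolkit but requires knowing the closed form in advance and a somewhat delicate cancellation of cross terms.
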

\begin{proof}
$(a)$ is proved in [L5, 1.3 (e)].

$(b)$ is proved in [X1] and [X2].

Now we give the proof of $(c).$ We use induction on $a+u+r.$ When
$a=0,$ it follows from $(a)$. When $u=0,$ it is obvious. When $r=0,$
it follows from $(b).$

Now suppose that $a,u,r\geq 1.$ Note that $$\left[\begin{array}{c}a+u \\
u-f
\end{array}\right]=v^{-u+f}\left[\begin{array}{c}a+u-1 \\
u-f
\end{array}\right]+v^{a+f}\left[\begin{array}{c}a+u-1 \\
u-1-f
\end{array}\right],$$$$\left[\begin{array}{c}b+r-f \\
r
\end{array}\right]=v^{-r}\left[\begin{array}{c}b+r-f-1 \\
r
\end{array}\right]+v^{b-f}\left[\begin{array}{c}b+r-f-1 \\
r-1
\end{array}\right].$$\vskip2mm

We have\begin{align*}\hspace*{-0.6cm}&\sum\limits_{0\leq f \leq u}(-1)^{f}\left[\begin{array}{c}a-c+f-1 \\
f
\end{array}\right]\left[\begin{array}{c}b+r-f \\
r
\end{array}\right]\left[\begin{array}{c}a+u \\
u-f
\end{array}\right]v^{f(u+c-r)}\end{align*}\vskip-4mm
\begin{align*}
&=\sum\limits_{0\leq f \leq u}(-1)^{f}\hspace*{-0.1cm}\left[\begin{array}{c}a-c+f-1 \\
f
\end{array}\right]\hspace*{-0.1cm}\left[\begin{array}{c}b+r-1-f \\
r-1
\end{array}\right]\hspace*{-0.1cm}\left[\begin{array}{c}a+u-1 \\
u-1-f
\end{array}\right]\hspace*{-0.1cm}v^{f(u+c-r)+a+b}\\
&+\sum\limits_{0\leq f \leq u}(-1)^{f}\hspace*{-0.1cm}\left[\hspace*{-0.1cm}\begin{array}{c}a-c+f-1 \\
f
\end{array}\hspace*{-0.1cm}\right]\hspace*{-0.2cm}\left[\hspace*{-0.1cm}\begin{array}{c}b+r-1-f \\
r
\end{array}\hspace*{-0.1cm}\right]\hspace*{-0.2cm}\left[\hspace*{-0.1cm}\begin{array}{c}a+u-1 \\
u-1-f
\end{array}\hspace*{-0.1cm}\right]\hspace*{-0.1cm}v^{f(u+c-r)-r+a+f}\\&
+\sum\limits_{0\leq f \leq u}(-1)^{f}\hspace*{-0.1cm}\left[\begin{array}{c}a-c+f-1 \\
f
\end{array}\right]\hspace*{-0.1cm}\left[\begin{array}{c}b+r-f \\
r
\end{array}\right]\hspace*{-0.1cm}\left[\begin{array}{c}a+u-1 \\
u-f
\end{array}\right]v^{f(u+c-r)-u+f}
\end{align*}\vskip-4mm
\begin{align*}
&=\sum\limits_{0\leq f \leq u}(-1)^{f}\hspace*{-0.1cm}\left[\hspace*{-0.1cm}\begin{array}{c}a-c+f-1 \\
f
\end{array}\hspace*{-0.1cm}\right]\hspace*{-0.1cm}\left[\hspace*{-0.1cm}\begin{array}{c}b+r-1-f \\
r-1
\end{array}\hspace*{-0.1cm}\right]\hspace*{-0.1cm}\left[\hspace*{-0.1cm}\begin{array}{c}a+u-1 \\
u-1-f
\end{array}\hspace*{-0.1cm}\right]\hspace*{-0.1cm}v^{f(u+c-r)+a+b}\\&
+\sum\limits_{0\leq f \leq u}(-1)^{f}\hspace*{-0.1cm}\left[\hspace*{-0.1cm}\begin{array}{c}a-c+f-1 \\
f
\end{array}\hspace*{-0.1cm}\right]\hspace*{-0.1cm}\left[\hspace*{-0.1cm}\begin{array}{c}b+r-1-f \\
r
\end{array}\hspace*{-0.1cm}\right]\hspace*{-0.1cm}\left[\hspace*{-0.1cm}\begin{array}{c}a+u-1 \\
u-1-f
\end{array}\hspace*{-0.1cm}\right]\hspace*{-0.1cm}v^{f(u+c-r)-r+a+f}\\&
+\sum\limits_{0\leq f \leq u}(-1)^{f}\hspace*{-0.1cm}\left[\begin{array}{c}a-c+f-2 \\
f
\end{array}\right]\hspace*{-0.1cm}\left[\begin{array}{c}b+r-f \\
r
\end{array}\right]\hspace*{-0.1cm}\left[\begin{array}{c}a+u-1 \\
u-f
\end{array}\right]v^{f(u+c-r)-u}\\&
+\sum\limits_{0\leq f \leq u}(-1)^{f}\hspace*{-0.1cm}\left[\hspace*{-0.15cm}\begin{array}{c}a-c+f-2 \\
f-1
\end{array}\hspace*{-0.15cm}\right]\hspace*{-0.15cm}\left[\hspace*{-0.15cm}\begin{array}{c}b+r-f \\
r
\end{array}\hspace*{-0.15cm}\right]\hspace*{-0.15cm}\left[\begin{array}{c}a+u-1 \\
u-f
\end{array}\right]\hspace*{-0.1cm}v^{f(u+c-r)-u+f+a-c-1}.
\end{align*}

We easily see that the second summation and the fourth summation in the last expression of the above identities cancel
out. Using the induction hypothesis we see that the last expression of
the above identities is $$\sum\limits_{0\leq \delta\leq u-1,r-1}v^{\delta b+(u-1-\delta)(c-a+1-r)+a+b}\left[\hspace*{-0.15cm}\begin{array}{c}b+r-u \\
r-1-\delta
\end{array}\hspace*{-0.15cm}\right]\hspace*{-0.15cm}\left[\hspace*{-0.15cm}\begin{array}{c}c+u-1-\delta \\
u-1-\delta
\end{array}\hspace*{-0.15cm}\right]\hspace*{-0.15cm}\left[\hspace*{-0.15cm}\begin{array}{c}a+u-1 \\
\delta
\end{array}\hspace*{-0.15cm}\right]$$$$+\sum\limits_{0\leq \delta\leq u,r}v^{\delta b+(u-\delta)(c-a+1-r)-u}\left[\begin{array}{c}b+r-u \\
r-\delta
\end{array}\right]\left[\begin{array}{c}c+u-\delta \\
u-\delta
\end{array}\right]\left[\begin{array}{c}a+u-1 \\
\delta
\end{array}\right]$$$$=\sum\limits_{0\leq \delta\leq u,r}v^{\delta b+(u-\delta)(c-a-r)}\left[\hspace*{-0.15cm}\begin{array}{c}b+r-u \\
r-\delta
\end{array}\hspace*{-0.15cm}\right]\hspace*{-0.15cm}\left[\hspace*{-0.15cm}\begin{array}{c}c+u-\delta \\
u-\delta
\end{array}\hspace*{-0.15cm}\right]\hspace*{-0.15cm}\Bigg\{v^{-\delta}\left[\hspace*{-0.15cm}\begin{array}{c}a+u-1 \\
\delta
\end{array}\hspace*{-0.15cm}\right]+v^{a+u-\delta}$$$$\left[\hspace*{-0.15cm}\begin{array}{c}a+u-1 \\
\delta-1
\end{array}\hspace*{-0.15cm}\right]\Bigg\}=\sum\limits_{0\leq \delta\leq u,r}v^{\delta b+(u-\delta)(c-a-r)}\left[\hspace*{-0.15cm}\begin{array}{c}b+r-u \\
r-\delta
\end{array}\hspace*{-0.15cm}\right]\left[\hspace*{-0.15cm}\begin{array}{c}c+u-\delta \\
u-\delta
\end{array}\hspace*{-0.15cm}\right]\left[\hspace*{-0.15cm}\begin{array}{c}a+u \\
\delta
\end{array}\hspace*{-0.15cm}\right].$$
\end{proof}
\vskip2mm Now we start the proof of Theorem 3.1.\vskip2mm

\begin{proof}
Applying the anti-automorphism $\sigma$ to these elements listed in the sets (1), (2), $\ldots, (12)$ and (13) of Part (1.1), we exactly get these elements listed in $(1'),$ $(2'),$ $\ldots, (12')$ and $(13')$ of Part (1.2). If we can prove that these elements, which are listed in the sets (1), (2), $\ldots, (12)$ and (13) of Part (1.1), are elements of the canonical basis $\dot{\mathbf{B}}$, then we conclude that these elements
listed in $(1'),$ $(2'),$ $\ldots, (12')$ and $(13')$ are also elements of $\dot{\mathbf{B}}$ by Theorem 2.3. Thus it is sufficient to prove that these elements listed in the sets (1), (2), $\ldots, (12)$ and (13) of Part (1.1) are elements of $\dot{\mathbf{B}}$.

\vskip3mm
$(1)$ For the element
$e_{2}^{(h)}e_{1}^{(k)}e_{2}^{(j)}1_{(l,m)}f_{2}^{(u)}f_{1}^{(v)}f_{2}^{(w)},$
its image under the map
$\dot{\mathbf{U}}\rightarrow V(-s\omega_{1}-t\omega_{2})\otimes
V(a\omega_{1}+b\omega_{2}),$ which is given by $u\mapsto
u(\xi_{(-s,-t)}\otimes\eta_{(a,b)}),$ is zero unless $l+2v-(u+w)=a-s$ and $m+2(u+w)-v=b-t,$ in
which case we get\vskip4mm

$e_{2}^{(h)}e_{1}^{(k)}e_{2}^{(j)}1_{(l,m)}f_{2}^{(u)}f_{1}^{(v)}f_{2}^{(w)}(\xi_{(-s,-t)}\otimes\eta_{(a,b)})$
\vskip2mm(using 2.1 (g)) \vskip2mm
$=e_{2}^{(h)}e_{1}^{(k)}e_{2}^{(j)}(\xi_{(-s,-t)}\otimes
f_{2}^{(u)}f_{1}^{(v)}f_{2}^{(w)}\eta_{(a,b)})$\vskip2mm
(using 2.1 (a)) \vskip2mm
$=\sum\limits_{0\leq r \leq
j}e_{2}^{(h)}e_{1}^{(k)}v^{r(j-r-t)}(e_{2}^{(j-r)}\xi_{(-s,-t)}\otimes
e_{2}^{(r)}f_{2}^{(u)}f_{1}^{(v)}f_{2}^{(w)}\eta_{(a,b)})$\vskip2mm
(using 2.1 (c)) \vskip2mm
$=\sum\limits_{0\leq r \leq
j}v^{r(j-r-t)}e_{2}^{(h)}e_{1}^{(k)}\Bigg\{e_{2}^{(j-r)}\xi_{(-s,-t)}$\vskip2mm$\otimes
\Big\{\sum\limits_{0\leq d \leq u,r}f_{2}^{(u-d)}\left[\begin{array}{c}k_{2};2d-u-r\\
d
\end{array}\right]e_{2}^{(r-d)}f_{1}^{(v)}f_{2}^{(w)}\eta_{(a,b)}\Big\}\Bigg\}$\vskip2mm
(using 2.1 (c$-$d)) \vskip2mm
$=\sum\limits_{\substack{0\leq r \leq j\\0\leq d \leq
u,r}}v^{r(j-r-t)}e_{2}^{(h)}e_{1}^{(k)}\Bigg\{e_{2}^{(j-r)}\xi_{(-s,-t)}$\vskip2mm$\otimes
f_{2}^{(u-d)}\hspace*{-0.1cm}\left[\begin{array}{c}\hspace*{-0.1cm}k_{2};2d-u-r\hspace*{-0.1cm}\\
d
\end{array}\right]\hspace*{-0.1cm}f_{1}^{(v)}f_{2}^{(w-r+d)}\left[\begin{array}{c}k_{2};r-d-w\\
r-d
\end{array}\right]\eta_{(a,b)}\Bigg\}$\vskip2mm
(using 2.1 (e))\vskip2mm
$=\sum\limits_{\substack{0\leq r \leq j\\0\leq d \leq
u,r}}\hspace*{-0.15cm}v^{r(j-r-t)}e_{2}^{(h)}e_{1}^{(k)}\hspace*{-0.05cm}
\Bigg\{\hspace*{-0.1cm}e_{2}^{(j-r)}\xi_{(-s,-t)}$\vskip2mm$\otimes
\left[\hspace*{-0.1cm}\begin{array}{c}b+r-u-2w+v \\
d
\end{array}\hspace*{-0.1cm}\right]\hspace*{-0.1cm}\left[\hspace*{-0.1cm}\begin{array}{c}b+r-d-w \\
r-d
\end{array}\hspace*{-0.1cm}\right]f_{2}^{(u-d)}f_{1}^{(v)}f_{2}^{(w-r+d)}\eta_{(a,b)}\Bigg\}$\vskip2mm
$=\sum\limits_{\substack{0\leq r \leq j\\0\leq d
\leq u,r}}v^{r(j-r-t)}\hspace*{-0.1cm}\left[\hspace*{-0.1cm}\begin{array}{c}b+r-u-2w+v \\
d
\end{array}\hspace*{-0.1cm}\right]\hspace*{-0.1cm}\left[\hspace*{-0.1cm}\begin{array}{c}b+r-d-w \\
r-d
\end{array}\hspace*{-0.1cm}\right]\hspace*{-0.1cm}e_{2}^{(h)}
\Bigg\{\hspace*{-0.1cm}\sum\limits_{0\leq p\leq k}$\vskip2mm$\times
v^{p(k-p)+p(-s-j+r)}e_{1}^{(k-p)}e_{2}^{(j-r)}\xi_{(-s,-t)}\otimes
f_{2}^{(u-d)}e_{1}^{(p)}f_{1}^{(v)}f_{2}^{(w-r+d)}\eta_{(a,b)}\hspace*{-0.1cm}\Bigg\}$\vskip2mm$
=\sum\limits_{\substack{0\leq r \leq j\\0\leq d \leq u,r\\0\leq
p\leq k,v}}v^{r(j-r-t)+p(k-p-s-j+r)}\hspace*{-0.1cm}\left[\hspace*{-0.1cm}\begin{array}{c}b+r-u-2w+v \\
d
\end{array}\hspace*{-0.1cm}\right]\hspace*{-0.15cm}\left[\hspace*{-0.1cm}\begin{array}{c}b+r-d-w \\
r-d
\end{array}\hspace*{-0.1cm}\right]$\vskip2mm$\times e_{2}^{(h)}\Bigg\{e_{1}^{(k-p)} e_{2}^{(j-r)}\xi_{(-s,-t)}\otimes
f_{2}^{(u-d)}f_{1}^{(v-p)}\left[\begin{array}{c}k_1;p-v \\
p
\end{array}\right]f_{2}^{(w-r+d)}\eta_{(a,b)}\Bigg\}$\vskip2mm$
=\sum\limits_{\substack{0\leq
p\leq k,v\\0\leq r \leq j\\0\leq d \leq u,r}}v^{r(j-r-t)+p(k-p-s-j+r)}$\vskip2mm
$\times\left[\hspace*{-0.1cm}\begin{array}{c}b+r-u-2w+v \\
d
\end{array}\hspace*{-0.1cm}\right]\hspace*{-0.1cm}\left[\hspace*{-0.1cm}\begin{array}{c}b+r-d-w \\
r-d
\end{array}\hspace*{-0.1cm}\right]\hspace*{-0.1cm}\left[\hspace*{-0.1cm}\begin{array}{c}a+p-v+w-r+d\\
p
\end{array}\hspace*{-0.1cm}\right]$\vskip2mm$\times e_{2}^{(h)}
\Bigg\{e_{1}^{(k-p)}e_{2}^{(j-r)}\xi_{(-s,-t)}\otimes
f_{2}^{(u-d)}f_{1}^{(v-p)}f_{2}^{(w-r+d)}\eta_{(a,b)}\Bigg\}$\vskip2mm
$=\sum\limits_{\substack{0\leq
p\leq k,v\\0\leq r \leq j\\0\leq d \leq u,r}}v^{r(j-r-t)+p(k-p-s-j+r)}\hspace*{-0.1cm}\left[\hspace*{-0.1cm}\begin{array}{c}b+r-u-2w+v \\
d
\end{array}\hspace*{-0.1cm}\right]\hspace*{-0.13cm}\left[\hspace*{-0.1cm}\begin{array}{c}b+r-d-w \\
r-d
\end{array}\hspace*{-0.1cm}\right]$\vskip2mm$
\times\left[\hspace*{-0.1cm}\begin{array}{c}a+p-v+w-r+d\\
p
\end{array}\hspace*{-0.1cm}\right]\hspace*{-0.1cm}
\Bigg\{\hspace*{-0.1cm}\sum\limits_{0\leq q\leq
h}v^{q(h-q)+q(-t+2j-2r-k+p)} $\vskip2mm$\times
e_{2}^{(h-q)}e_{1}^{(k-p)}e_{2}^{(j-r)}\xi_{(-s,-t)}\otimes
e_{2}^{(q)}f_{2}^{(u-d)}f_{1}^{(v-p)}f_{2}^{(w-r+d)}\eta_{(a,b)}\Bigg\}$\vskip2mm$
=\sum\limits_{\substack{0\leq p\leq k,v\\0\leq d \leq u,r\\0\leq r
\leq j,~0\leq q\leq
h}}\hspace*{-0.2cm}v^{r(j-r-t)+p(k-p-s-j+r)+q(h-q-t+2j-2r-k+p)}
$\vskip2mm$\times\left[\hspace*{-0.1cm}\begin{array}{c}b+r-u-2w+v \\
d
\end{array}\hspace*{-0.1cm}\right]\hspace*{-0.1cm}\left[\hspace*{-0.1cm}\begin{array}{c}b+r-d-w \\
r-d
\end{array}\hspace*{-0.1cm}\right]\hspace*{-0.1cm}\left[\hspace*{-0.1cm}\begin{array}{c}a+p-v+w-r+d\\
p
\end{array}\hspace*{-0.1cm}\right]$\vskip2mm$\times e_{2}^{(h-q)}e_{1}^{(k-p)}e_{2}^{(j-r)}\xi_{(-s,-t)}\otimes
\Bigg\{\sum\limits_{0\leq t'\leq q,u-d}f_{2}^{(u-d-t')}$\vskip2mm$\times\left[\begin{array}{c}k_{2};2t'-(u-d+q) \\
t'
\end{array}\right]e_{2}^{(q-t')}f_{1}^{(v-p)}f_{2}^{(w-r+d)}\eta_{(a,b)}\Bigg\}$\vskip2mm
$=\sum\limits_{\substack{0\leq p \leq k,v \\0\leq r \leq j,~0\leq d
\leq u,r\\ 0\leq q\leq h,~0\leq t'\leq
q,u-d}}v^{r(j-r-t)+p(k-p-s-j+r)+q(h-q-t-k+p+2j-2r)}$\vskip2mm$
\times\left[\hspace*{-0.1cm}\begin{array}{c}b+r-u-2w+v \\
d
\end{array}\hspace*{-0.1cm}\right]\hspace*{-0.1cm}\hspace*{-0.1cm}\left[\hspace*{-0.1cm}\begin{array}{c}b+r-d-w \\
r-d
\end{array}\hspace*{-0.1cm}\right]\hspace*{-0.1cm}\left[\hspace*{-0.1cm}\begin{array}{c}a+p-v+w-r+d\\
p
\end{array}\hspace*{-0.1cm}\right]$\vskip2mm$\times
e_{2}^{(h-q)}e_{1}^{(k-p)}e_{2}^{(j-r)}\xi_{(-s,-t)}\otimes
f_{2}^{(u-d-t')}\left[\hspace*{-0.1cm}\begin{array}{c}k_{2};2t'-(u-d+q) \\
t'
\end{array}\hspace*{-0.1cm}\right]$\vskip2mm$\times f_{1}^{(v-p)}f_{2}^{(w-r+d-q+t')}
\hspace*{-0.1cm}\left[\hspace*{-0.15cm}\begin{array}{c}k_{2};q-t'-w+r-d \\
q-t'
\end{array}\hspace*{-0.15cm}\right]\hspace*{-0.1cm}\eta_{(a,b)}$\vskip2mm
$=\sum\limits_{\substack{0\leq p \leq k,v \\0\leq r \leq j,~0\leq d
\leq u,r\\ 0\leq q\leq h,~0\leq t'\leq
q,u-d}}v^{r(j-r-t)+p(k-p-s-j+r)+q(h-q-t-k+p+2j-2r)}
$\vskip2mm$\times\left[\hspace*{-0.1cm}\begin{array}{c}b+r-u-2w+v \\
d
\end{array}\hspace*{-0.1cm}\right]
\hspace*{-0.15cm}\left[\hspace*{-0.1cm}\begin{array}{c}b+r-d-w \\
r-d
\end{array}\hspace*{-0.1cm}\right]
\hspace*{-0.15cm}\left[\hspace*{-0.15cm}\begin{array}{c}a+p-v+w-r+d \\
p
\end{array}\hspace*{-0.15cm}\right]$\vskip2mm$\times\left[\hspace*{-0.1cm}\begin{array}{c}b+q-u-2w+v-p+2r-d \\
t'
\end{array}\hspace*{-0.1cm}\right]\hspace*{-0.15cm}\left[\hspace*{-0.15cm}\begin{array}{c}b+q-t'-w+r-d \\
q-t'
\end{array}\hspace*{-0.15cm}\right]$\vskip2mm$\times e_{2}^{(h-q)}e_{1}^{(k-p)}e_{2}^{(j-r)}\xi_{(-s,-t)}\otimes
f_{2}^{(u-d-t')}f_{1}^{(v-p)}f_{2}^{(w-r+d-q+t')}\eta_{(a,b)}.$
\vskip4mm

Let $A$ denote the degree of the coefficient (with respect to $v$) in the last expression of the above identities, then\vskip2mm
$A=r(j-r-t)+p(k-p-s-j+r)+q(h-q-t-k+p+2j-2r)+d(b+r-d-u-2w+v)+(r-d)(b-w)+
p(a-v+w-r+d)+t'(b+v+q-u-2w-p-t'+2r-d)+(q-t')(b-w+r-d).$

\vskip2mm If $-l \geq v+k-j-u,~~-m\geq u+j,~~k\geq h+j,~~v\geq u+w$,
then we get $v\geq a+w+k-s-j,~~u+2w\geq b+v+j-t,~~w\geq b+j-t,~~k\geq h+j,~~v\geq u+w.$
\vskip2mm In this case $A\leq
-p^2+pd+p(a+w-v+k-s-j)+q(h+j-k)+q(-q+p-t+j-2r)+
t'(q-t'-j+t+2r-d-p)+(q-t')(t-j+r-d)+r(j-r-t)+d(r-d+t-j)+(r-d)(t-j)\leq
-p^2+pd-r^2+d(r-d)+p(a+w-v+k-s-j)+q(h+j-k)+q(-q+p-r-d)+t'(q-t'+r-p)
\leq -p^2+pd-r^2+d(r-d)+q(-q+p-r-d)+t'(q-t'+r-p).$ \vskip2mm

If $q\geq p,$ then $A\leq
-(p-d)^2-pd+r(d-r)+(t'-q)(q-p+r)-t'^2-qd\leq 0;$ \vskip2mm If $q<
p,$ then $A\leq -p^2+pd-r^2+d(r-d)+q(-q+p-r-d)+t'(q-t'+r-p)
\leq$\vskip2mm$\begin{cases}-(p-d)^2-pd+r(d-r)-t'^2+(q-p)t'+
r(t'-q)-q^2+q(p-d)< 0 &\\\hfill \hbox {if }p\leq d;         \\
-t'^2+(q-p)t'+r(t'-q)-q^2+(p-q)(d-p)+r(d-r)-d^2 < 0&
\\\hfill\hbox {if } p > d.
\end{cases}$\vskip4mm

So we get $A\leq 0$  and  $A=0$ if and only if $p=r=d=q=t'=0.$
Meanwhile the above expression is fixed by the involution $\Psi$ of
$V(-s\omega_{1}-t\omega_{2})\otimes V(a\omega_{1}+b\omega_{2})$,
since the element
$e_{2}^{(h)}e_{1}^{(k)}e_{2}^{(j)}1_{(l,m)}f_{2}^{(u)}f_{1}^{(v)}f_{2}^{(w)}$
is fixed by $-:\dot{\mathbf{U}}\rightarrow \dot{\mathbf{U}}$. By
using the definitions, we can see that the above expression is
$(\theta_{2}^{(h)}\theta_{1}^{(k)}\theta_{2}^{(j)}\lozenge
\theta_{2}^{(u)}\theta_{1}^{(v)}\theta_{2}^{(w)})_{(s,t),(a,b)},$ if
$v\geq a+w+k-s-j,~~u+2w\geq b+v+j-t,~~w\geq b+j-t,~~k\geq h+j,~~v\geq u+w.$ Hence the
element listed in $(1)$ is an element of $\dot{\mathbf{B}}.$

\vskip3mm
$(2)$ We want to compute the image of this element listed in $(2)$ under the map
$\dot{\mathbf{U}}\rightarrow V(-s\omega_{1}-t\omega_{2})\otimes
V(a\omega_{1}+b\omega_{2}),$ which is given by $u\mapsto
u(\xi_{(-s,-t)}\otimes\eta_{(a,b)}).$ Its image is zero unless
$l+2v-(u+w)=a-s$ and $m+2(u+w)-v=b-t.$ In this case, using the computations in the
proof of (1), we get the image of this element under
the above-mentioned map, which is the following element (replacing $p$ by $l$):\vskip3mm

$B=\sum\limits_{0\leq l \leq j,u}\hspace*{-0.1cm}(-1)^{l}\hspace*{-0.1cm}\left[\hspace*{-0.2cm}\begin{array}{c}v-u-2w+b+j-t+l-1 \\
l
\end{array}\hspace*{-0.2cm}\right]$\vskip2mm$\times\Bigg\{\sum\limits_{\substack{0\leq p \leq k,v \\0\leq r \leq j-l,~0\leq d
\leq u-l,r\\ 0\leq q\leq h,~0\leq t'\leq
q,u-l-d}}v^{r(j-l-r-t)+p(k-p-s-j+l+r)+q(h-q-t-k+p+2j-2l-2r)}
$\vskip2mm$\times\left[\hspace*{-0.1cm}\begin{array}{c}b+r-u-2w+v+l \\
d
\end{array}\hspace*{-0.1cm}\right]
\hspace*{-0.15cm}\left[\hspace*{-0.1cm}\begin{array}{c}b+r-d-w \\
r-d
\end{array}\hspace*{-0.1cm}\right]
\hspace*{-0.15cm}\left[\hspace*{-0.15cm}\begin{array}{c}a+p-v+w-r+d \\
p
\end{array}\hspace*{-0.15cm}\right]$\vskip2mm$\times\left[\hspace*{-0.1cm}\begin{array}{c}b+q-u-2w+v-p+2r-d+l \\
t'
\end{array}\hspace*{-0.1cm}\right]\hspace*{-0.15cm}\left[\hspace*{-0.15cm}\begin{array}{c}b+q-t'-w+r-d \\
q-t'
\end{array}\hspace*{-0.15cm}\right]$\vskip2mm$\times e_{2}^{(h-q)}e_{1}^{(k-p)}e_{2}^{(j-l-r)}\xi_{(-s,-t)}\otimes
f_{2}^{(u-l-d-t')}f_{1}^{(v-p)}f_{2}^{(w-r+d-q+t')}\eta_{(a,b)}\Bigg\}.$

\vskip3mm If $-l \geq v-u+k-j,~~u+j+(u+w-v)\leq -m \leq u+j,~~-m \geq
u+j+(j+h-k),~~k\geq h+j,~~v\geq u+w$, by the equalities $l+2v-(u+w)=a-s$ and $m+2(u+w)-v=b-t,$ then we get $v\geq a+w+k-s-j, ~~b+v+j-t+(j+h-k)\leq u+2w\leq b+v+j-t, ~~w\geq b+j-t,~~k\geq
h+j,~~ v\geq u+w.$ Under these conditions we have the following.\vskip3mm

If we let $r=r'-l,~~ d=d'-l,$ then we get\vskip3mm

$B=\sum\limits_{0\leq l \leq j,u}\hspace*{-0.1cm}(-1)^{l}\hspace*{-0.1cm}\left[\hspace*{-0.2cm}\begin{array}{c}v-u-2w+b+j-t+l-1 \\
l
\end{array}\hspace*{-0.2cm}\right]$\vskip2mm$\times\Bigg\{\sum\limits_{\substack{0\leq p \leq k,v \\l\leq r' \leq j,~l\leq d'
\leq u,r'\\ 0\leq q\leq h,~0\leq t'\leq
q,u-d'}}v^{(r'-l)(j-r'-t)+p(k-p-s-j+r')+q(h-q-t-k+p+2j-2r')}
$\vskip2mm$\times\left[\hspace*{-0.1cm}\begin{array}{c}b-u-2w+v+r' \\
d'-l
\end{array}\hspace*{-0.1cm}\right]
\hspace*{-0.15cm}\left[\hspace*{-0.1cm}\begin{array}{c}b+r'-d'-w \\
r'-d'
\end{array}\hspace*{-0.1cm}\right]
\hspace*{-0.15cm}\left[\hspace*{-0.15cm}\begin{array}{c}a+p-v+w-r'+d' \\
p
\end{array}\hspace*{-0.15cm}\right]$\vskip2mm$\times\left[\hspace*{-0.1cm}\begin{array}{c}b+q-u-2w+v-p+2r'-d' \\
t'
\end{array}\hspace*{-0.1cm}\right]\hspace*{-0.15cm}\left[\hspace*{-0.15cm}\begin{array}{c}b+q-t'-w+r'-d' \\
q-t'
\end{array}\hspace*{-0.15cm}\right]$\vskip2mm$\times e_{2}^{(h-q)}e_{1}^{(k-p)}e_{2}^{(j-r')}\xi_{(-s,-t)}\otimes
f_{2}^{(u-d'-t')}f_{1}^{(v-p)}f_{2}^{(w-r'+d'-q+t')}\eta_{(a,b)}\Bigg\}$

\vskip3mm $=\sum\limits_{\substack{0\leq p \leq k,v \\0\leq r' \leq j,~0\leq d'
\leq u,r'\\ 0\leq q\leq h,~0\leq t'\leq
q,u-d'}}v^{r'(j-r'-t)+p(k-p-s-j+r')+q(h-q-t-k+p+2j-2r')}
$\vskip2mm$\times\left[\hspace*{-0.1cm}\begin{array}{c}b+r'-d'-w \\
r'-d'
\end{array}\hspace*{-0.1cm}\right]
\hspace*{-0.15cm}\left[\hspace*{-0.15cm}\begin{array}{c}a+p-v+w-r'+d' \\
p
\end{array}\hspace*{-0.15cm}\right]$\vskip2mm$\times\left[\hspace*{-0.1cm}\begin{array}{c}b+q-u-2w+v-p+2r'-d' \\
t'
\end{array}\hspace*{-0.1cm}\right]\hspace*{-0.15cm}\left[\hspace*{-0.15cm}\begin{array}{c}b+q-t'-w+r'-d' \\
q-t'
\end{array}\hspace*{-0.15cm}\right]$\vskip3mm$\times
\Bigg\{\sum\limits_{0\leq l \leq d'}\hspace*{-0.1cm}(-1)^{l}\hspace*{-0.1cm}\left[\hspace*{-0.2cm}\begin{array}{c}v-u-2w+b+j-t+l-1 \\
l
\end{array}\hspace*{-0.2cm}\right]\hspace*{-0.15cm}\left[\hspace*{-0.1cm}\begin{array}{c}b-u-2w+v+r' \\
d'-l
\end{array}\hspace*{-0.1cm}\right]
\hspace*{-0.15cm}v^{l(r'+t-j)}\Bigg\}$\vskip3mm$
\times e_{2}^{(h-q)}e_{1}^{(k-p)}e_{2}^{(j-r')}\xi_{(-s,-t)}\otimes
f_{2}^{(u-d'-t')}f_{1}^{(v-p)}f_{2}^{(w-r'+d'-q+t')}\eta_{(a,b)}.$

\vskip3mm Using Lemma 4.1 (b), we see that \vskip3mm$\sum\limits_{0\leq l \leq d'}\hspace*{-0.1cm}(-1)^{l}\hspace*{-0.1cm}\left[\hspace*{-0.2cm}\begin{array}{c}v-u-2w+b+j-t+l-1 \\
l
\end{array}\hspace*{-0.2cm}\right]\hspace*{-0.15cm}\left[\hspace*{-0.1cm}\begin{array}{c}b-u-2w+v+r' \\
d'-l
\end{array}\hspace*{-0.1cm}\right]
\hspace*{-0.15cm}v^{l(r'+t-j)}$\vskip2mm$=\left[\hspace*{-0.1cm}\begin{array}{c}r'+t-j \\
d'
\end{array}\hspace*{-0.1cm}\right]v^{-(v-u-2w+b+j-t)d'}.$

\vskip3mm So we have \vskip3mm$B=\sum\limits_{\substack{0\leq p \leq k,v \\0\leq r' \leq j,~0\leq d'
\leq u,r'\\ 0\leq q\leq h,~0\leq t'\leq
q,u-d'}}v^{-(v-u-2w+b+j-t)d'+r'(j-r'-t)+p(k-p-s-j+r')+q(h-q-t-k+p+2j-2r')}
$\vskip2mm$\times\left[\hspace*{-0.1cm}\begin{array}{c}r'+t-j \\
d'
\end{array}\hspace*{-0.1cm}\right]\hspace*{-0.15cm}\left[\hspace*{-0.1cm}\begin{array}{c}b+r'-d'-w \\
r'-d'
\end{array}\hspace*{-0.1cm}\right]
\hspace*{-0.15cm}\left[\hspace*{-0.15cm}\begin{array}{c}a+p-v+w-r'+d' \\
p
\end{array}\hspace*{-0.15cm}\right]$\vskip2mm$\times\left[\hspace*{-0.1cm}\begin{array}{c}b+q-u-2w+v-p+2r'-d' \\
t'
\end{array}\hspace*{-0.1cm}\right]\hspace*{-0.15cm}\left[\hspace*{-0.15cm}\begin{array}{c}b+q-t'-w+r'-d' \\
q-t'
\end{array}\hspace*{-0.15cm}\right]$\vskip2mm$
\times e_{2}^{(h-q)}e_{1}^{(k-p)}e_{2}^{(j-r')}\xi_{(-s,-t)}\otimes
f_{2}^{(u-d'-t')}f_{1}^{(v-p)}f_{2}^{(w-r'+d'-q+t')}\eta_{(a,b)}.$

\vskip3mm Let $\mathbb{B}$ denote the degree of the coefficient in
the above expression $B,$ then we have \vskip2mm
$\mathbb{B}=-(v-u-2w+b+j-t)d'+r'(j-r'-t)+p(k-p-s-j+r')+q(h-q-t-k+p+2j-2r')+d'(r'-d'+t-j)+
(r'-d')(b-w)+p(a-v+w-r'+d')+t'(b+q-p-u-2w+v-t'+2r'-d')+(q-t')(b-w+r'-d').$

\vskip3mm Since $v\geq a+w+k-s-j, ~~b+v+j-t+(j+h-k)\leq u+2w\leq b+v+j-t, ~~w\geq b+j-t,~~k\geq
h+j,~~ v\geq u+w,$ in this case we get
\vskip3mm
$\mathbb{B}\leq p(-p+d')+r'(j-r'-t)+q(-q-t+p+j-2r')+q(h+j-k)+d'(r'-d'+t-j)+(r'-d')(b-w)+t'(q-p-t'+2r'-d'+t-j)+(q-t')(b-w+r'-d')\leq -p^2+pd'-r'^2+d'(r'-d')+q(-q+p-r'-d')+t'(q-t'+r'-p)\leq 0$ and $\mathbb{B}=0$ if and only if $p=q=r'=d'=t'=0.$\vskip3mm
Meanwhile this element $B$ is fixed by the involution $\Psi$ of
$V(-s\omega_{1}-t\omega_{2})\otimes V(a\omega_{1}+b\omega_{2})$,
since the element listed in (2) is fixed by
$-:\dot{\mathbf{U}}\rightarrow \dot{\mathbf{U}}$. By using the
definitions, we can see that this element $B$ is
$(\theta_{2}^{(h)}\theta_{1}^{(k)}\theta_{2}^{(j)}\lozenge
\theta_{2}^{(u)}\theta_{1}^{(v)}\theta_{2}^{(w)})_{(s,t),(a,b)},$
$\mbox{if}~v\geq a+w+k-s-j, ~~b+v+j-t+(j+h-k)\leq u+2w\leq b+v+j-t, ~~w\geq b+j-t,~~k\geq
h+j,~~ v\geq u+w.$ Hence the
element listed in (2) is an element of $\dot{\mathbf{B}}.$

\vskip3mm Similarly we can deal with $(3)$-$(5)$ by using the computations in the proof of (1) and repeatedly using Lemma 4.1 (b) (referring to the proof of (6) below).

\vskip3mm

$(6)$ We want to compute the image of this element listed in $(6)$ under the map
$\dot{\mathbf{U}}\rightarrow V(-s\omega_{1}-t\omega_{2})\otimes
V(a\omega_{1}+b\omega_{2}),$ which is given by $u\mapsto
u(\xi_{(-s,-t)}\otimes\eta_{(a,b)}).$ Its image is zero unless
$l+2v-(u+w)=a-s$ and $m+2(u+w)-v=b-t.$ In this case, using the computations in the proof of (1), we get the image of this element under
the above-mentioned map, which is the following element (replacing $p, q, r,
i$ by $l, f, e, z$ respectively):\vskip3mm
$C=\sum\limits_{\substack{0\leq l,f,~ l+f \leq j\\0\leq e,z,~
e+z\leq h\\0\leq l,e,~ l+e\leq u\\0\leq f,z, ~f+z\leq
w}}\hspace*{-0.1cm}(-1)^{l+f+e+z}\hspace*{-0.1cm}\left[\hspace*{-0.2cm}\begin{array}{c}v-u-2w+b+j-t+2z+f+e+l-1 \\
l
\end{array}\hspace*{-0.2cm}\right]$\vskip2mm$\times\left[\hspace*{-0.2cm}\begin{array}{c}b-w+j-t+z+f-1 \\
f
\end{array}\hspace*{-0.2cm}\right]\hspace*{-0.1cm}\left[\hspace*{-0.2cm}\begin{array}{c}b-w+j-t+j+h-k+z-1 \\
z
\end{array}\hspace*{-0.2cm}\right]$\vskip2mm$\times\left[\hspace*{-0.2cm}\begin{array}{c}v-u-2w+b+j-t+j+h-k+z+e-1 \\
e
\end{array}\hspace*{-0.2cm}\right]$\vskip2mm$\times\Bigg\{\sum\limits_{\substack{0\leq p \leq k,v
\\0\leq r \leq j-l-f\\0\leq d \leq u-l-e,r\\ 0\leq q\leq h-e-z\\0\leq
t'\leq
q,u-l-e-d}}\left[\hspace*{-0.2cm}\begin{array}{c}b+q-u-2w+v-p+2r-d+l+e+2f+2z \\
t'
\end{array}\hspace*{-0.2cm}\right]$\vskip2mm$\times v^{r(j-l-f-r-t)+p(k-p-s-j+l+f+r)+q(h-e-z-q-t-k+p+2j-2l-2f-2r)}
$\vskip2mm$\times\left[\hspace*{-0.1cm}\begin{array}{c}b+r-u-2w+v+l+e+2f+2z \\
d
\end{array}\hspace*{-0.1cm}\right]
\hspace*{-0.15cm}\left[\hspace*{-0.1cm}\begin{array}{c}b+r-d-w+f+z \\
r-d
\end{array}\hspace*{-0.1cm}\right]
$\vskip2mm$\times\left[\hspace*{-0.1cm}\begin{array}{c}a+p-v+w-r+d-f-z \\
p
\end{array}\hspace*{-0.1cm}\right]\hspace*{-0.15cm}\left[\hspace*{-0.15cm}\begin{array}{c}b+q-t'-w+r-d+f+z \\
q-t'
\end{array}\hspace*{-0.15cm}\right]$\vskip2mm$\times e_{2}^{(h-e-z-q)}e_{1}^{(k-p)}e_{2}^{(j-l-f-r)}\xi_{(-s,-t)}\hspace*{-0.1cm}\otimes\hspace*{-0.1cm}
f_{2}^{(u-l-e-d-t')}f_{1}^{(v-p)}f_{2}^{(w-f-z-r+d-q+t')}\eta_{(a,b)}\Bigg\}.$

\vskip3mm If
$-m\leq u+j+(j+h-k)+(u+w-v),~~-l-m\geq j+h+u+w,~~k\geq h+j,~~ v\geq
u+w,$ by the equalities $l+2v-(u+w)=a-s$ and $m+2(u+w)-v=b-t,$ then we get $w\leq b+j+(j+h-k)-t, ~~v\geq a+b+j+h-s-t,~~k\geq
h+j,~~ v\geq u+w,$ from them we also get $v\geq a+w+k-s-j, ~~u+2w\leq
b+v+j-t+j+h-k\leq b+v+j-t.$ Under these conditions we have the following.\vskip3mm

If we let $q=q'-e-z,~~ r=r'-l-f,~~ d=d'-l,~~ t'=t''-e,$ then we get

\vskip3mm $C=\sum\limits_{\substack{0\leq l,f,~ l+f \leq j\\0\leq
e,z,~ e+z\leq h\\0\leq l,e,~ l+e\leq u\\0\leq f,z, ~f+z\leq
w}}\hspace*{-0.1cm}(-1)^{l+f+e+z}\hspace*{-0.1cm}\left[\hspace*{-0.2cm}\begin{array}{c}v-u-2w+b+j-t+2z+f+e+l-1 \\
l
\end{array}\hspace*{-0.2cm}\right]$\vskip2mm$\times\left[\hspace*{-0.2cm}\begin{array}{c}b-w+j-t+z+f-1 \\
f
\end{array}\hspace*{-0.2cm}\right]\hspace*{-0.1cm}\left[\hspace*{-0.2cm}\begin{array}{c}b-w+j-t+j+h-k+z-1 \\
z
\end{array}\hspace*{-0.2cm}\right]$\vskip2mm$\times\left[\hspace*{-0.2cm}\begin{array}{c}v-u-2w+b+j-t+j+h-k+z+e-1 \\
e
\end{array}\hspace*{-0.2cm}\right]$\vskip2mm$\times\Bigg\{\sum\limits_{\substack{0\leq p \leq k,v
\\l+f\leq r' \leq j,~l\leq d' \leq u-e,r'-f\\ e+z\leq q'\leq h,~e\leq
t''\leq
q'-z,u-d'}}\hspace*{-0.1cm}\left[\hspace*{-0.2cm}\begin{array}{c}b+q'-u-2w+v-p+2r'-d'+z \\
t''-e
\end{array}\hspace*{-0.2cm}\right]$\vskip2mm$\times v^{(r'-l-f)(j-r'-t)+p(k-p-s-j+r')+(q'-e-z)(h-q'-t-k+p+2j-2r')}
$\vskip2mm$\times\left[\hspace*{-0.1cm}\begin{array}{c}b+r-u-2w+v+e+f+2z \\
d'-l
\end{array}\hspace*{-0.1cm}\right]
\hspace*{-0.15cm}\left[\hspace*{-0.1cm}\begin{array}{c}b+r'-d'-w+z \\
r'-d'-f
\end{array}\hspace*{-0.1cm}\right]
$\vskip2mm$\times\left[\hspace*{-0.1cm}\begin{array}{c}a+p-v+w-r'+d'-z \\
p
\end{array}\hspace*{-0.1cm}\right]\hspace*{-0.15cm}\left[\hspace*{-0.15cm}\begin{array}{c}b+q'-t''-w+r'-d' \\
q'-t''-z
\end{array}\hspace*{-0.15cm}\right]$\vskip2mm$\times e_{2}^{(h-q')}e_{1}^{(k-p)}e_{2}^{(j-r')}\xi_{(-s,-t)}\hspace*{-0.1cm}\otimes\hspace*{-0.1cm}
f_{2}^{(u-d'-t'')}f_{1}^{(v-p)}f_{2}^{(w-r'+d'-q'+t'')}\eta_{(a,b)}\Bigg\}$

\vskip3mm $=\sum\limits_{\substack{0\leq f \leq j,~0\leq e\leq
u\\0\leq e,z,~
e+z\leq h\\0\leq f,z, ~f+z\leq w}}\hspace*{-0.05cm}\left[\hspace*{-0.2cm}\begin{array}{c}v-u-2w+b+j-t+j+h-k+z+e-1 \\
e
\end{array}\hspace*{-0.2cm}\right]$\vskip2mm$\times (-1)^{f+e+z}\hspace*{-0.15cm}\left[\hspace*{-0.2cm}\begin{array}{c}b\hspace*{-0.02cm}-\hspace*{-0.02cm}w\hspace*{-0.02cm}+\hspace*{-0.02cm}j\hspace*{-0.02cm}-\hspace*{-0.02cm}t\hspace*{-0.02cm}+\hspace*{-0.02cm}z\hspace*{-0.02cm}+\hspace*{-0.02cm}f\hspace*{-0.02cm}-\hspace*{-0.02cm}1 \\
f
\end{array}\hspace*{-0.2cm}\right]\hspace*{-0.15cm}\left[\hspace*{-0.2cm}\begin{array}{c}b\hspace*{-0.02cm}-\hspace*{-0.02cm}w\hspace*{-0.02cm}+\hspace*{-0.02cm}j\hspace*{-0.02cm}-\hspace*{-0.02cm}t\hspace*{-0.02cm}+\hspace*{-0.02cm}j\hspace*{-0.02cm}+\hspace*{-0.02cm}h\hspace*{-0.02cm}-\hspace*{-0.02cm}k\hspace*{-0.02cm}+\hspace*{-0.02cm}z\hspace*{-0.02cm}-\hspace*{-0.02cm}1 \\
z
\end{array}\hspace*{-0.2cm}\right]$\vskip2mm$\times\Bigg\{\sum\limits_{\substack{0\leq p \leq k,v
\\f\leq r' \leq j,~0\leq d' \leq u-e,r'-f\\ e+z\leq q'\leq h,~e\leq
t''\leq
q'-z,u-d'}}\hspace*{-0.1cm}\left[\hspace*{-0.2cm}\begin{array}{c}b+q'-u-2w+v-p+2r'-d'+z \\
t''-e
\end{array}\hspace*{-0.2cm}\right]$\vskip2mm$\times v^{(r'-f)(j-r'-t)+p(k-p-s-j+r')+(q'-e-z)(h-q'-t-k+p+2j-2r')}
\hspace*{-0.15cm}\left[\hspace*{-0.2cm}\begin{array}{c}b\hspace*{-0.02cm}+\hspace*{-0.02cm}r'\hspace*{-0.02cm}-\hspace*{-0.02cm}d'\hspace*{-0.02cm}-\hspace*{-0.02cm}w\hspace*{-0.02cm}+\hspace*{-0.02cm}z \\
r'-d'-f
\end{array}\hspace*{-0.2cm}\right]
$\vskip2mm$\times\left[\hspace*{-0.1cm}\begin{array}{c}a+p-v+w-r'+d'-z \\
p
\end{array}\hspace*{-0.1cm}\right]\hspace*{-0.15cm}\left[\hspace*{-0.15cm}\begin{array}{c}b+q'-t''-w+r'-d' \\
q'-t''-z
\end{array}\hspace*{-0.15cm}\right]$\vskip2mm$\times\Bigg\{\hspace*{-0.15cm}
\sum\limits_{0\leq l\leq d'}(-1)^{l}\hspace*{-0.1cm}\left[\hspace*{-0.2cm}\begin{array}{c}v-u-2w+b+j-t+2z+f+e+l-1 \\
l
\end{array}\hspace*{-0.2cm}\right]$\vskip2mm$\times\left[\hspace*{-0.1cm}\begin{array}{c}b+r-u-2w+v+e+f+2z \\
d'-l
\end{array}\hspace*{-0.1cm}\right]v^{l(r'+t-j)}\Bigg\}$\vskip2mm$
\times e_{2}^{(h-q')}e_{1}^{(k-p)}e_{2}^{(j-r')}\xi_{(-s,-t)}\hspace*{-0.1cm}\otimes\hspace*{-0.1cm}
f_{2}^{(u-d'-t'')}f_{1}^{(v-p)}f_{2}^{(w-r'+d'-q'+t'')}\eta_{(a,b)}\Bigg\}.$

\vskip3mm Using Lemma 4.1 (b), we see that \vskip3mm$\sum\limits_{0\leq l\leq d'}(-1)^{l}\hspace*{-0.1cm}\left[\hspace*{-0.2cm}\begin{array}{c}v-u-2w+b+j-t+2z+f+e+l-1 \\
l
\end{array}\hspace*{-0.2cm}\right]$\vskip2mm$\times\left[\hspace*{-0.1cm}\begin{array}{c}b+r-u-2w+v+e+f+2z \\
d'-l
\end{array}\hspace*{-0.1cm}\right]v^{l(r'+t-j)}$\vskip2mm$=\left[\hspace*{-0.1cm}\begin{array}{c}r'+t-j \\
d'
\end{array}\hspace*{-0.1cm}\right]v^{-(v-u-2w+b+j-t+2z+f+e)d'}.$

\vskip3mm So we have \vskip3mm$C=\sum\limits_{\substack{0\leq e\leq
u\\0\leq z\leq w\\0\leq e,z,~ e+z\leq
h}}\hspace*{-0.1cm}(-1)^{e+z}\left[\hspace*{-0.2cm}\begin{array}{c}b-w+j-t+j+h-k+z-1 \\
z
\end{array}\hspace*{-0.2cm}\right]$\vskip2mm$\times
\left[\hspace*{-0.2cm}\begin{array}{c}v-u-2w+b+2j-t+h-k+z+e-1 \\
e
\end{array}\hspace*{-0.2cm}\right]$\vskip2mm
$\times\Bigg\{\sum\limits_{\substack{0\leq p \leq k,v
\\0\leq r' \leq j,~0\leq d' \leq u-e,r'\\ e+z\leq q'\leq h,~e\leq
t''\leq
q'-z,u-d'}}\hspace*{-0.1cm}\left[\hspace*{-0.2cm}\begin{array}{c}b+q'-u-2w+v-p+2r'-d'+z \\
t''-e
\end{array}\hspace*{-0.2cm}\right]$\vskip2mm$\times v^{r'(j-r'-t)+p(k-p-s-j+r')+(q'-e-z)(h-q'-t-k+p+2j-2r')}
$\vskip2mm$\times v^{-(v-u-2w+b+j-t+2z+e)d'}\left[\hspace*{-0.1cm}\begin{array}{c}r'+t-j \\
d'
\end{array}\hspace*{-0.1cm}\right]
$\vskip2mm$\times\left[\hspace*{-0.1cm}\begin{array}{c}a+p-v+w-r'+d'-z \\
p
\end{array}\hspace*{-0.1cm}\right]\hspace*{-0.15cm}\left[\hspace*{-0.15cm}\begin{array}{c}b+q'-t''-w+r'-d' \\
q'-t''-z
\end{array}\hspace*{-0.15cm}\right]$\vskip2mm$\times \Bigg\{\hspace*{-0.15cm}\sum\limits_{0\leq f\leq
r'-d'}(-1)^{f}\left[\hspace*{-0.2cm}\begin{array}{c}b-w+j-t+z+f-1 \\
f
\end{array}\hspace*{-0.2cm}\right]\hspace*{-0.15cm}\left[\hspace*{-0.2cm}\begin{array}{c}b+r'-d'-w+z \\
r'-d'-f
\end{array}\hspace*{-0.2cm}\right]v^{f(r'-d'+t-j)}\Bigg\}$\vskip2mm$
\times e_{2}^{(h-q')}e_{1}^{(k-p)}e_{2}^{(j-r')}\xi_{(-s,-t)}\hspace*{-0.1cm}\otimes\hspace*{-0.1cm}
f_{2}^{(u-d'-t'')}f_{1}^{(v-p)}f_{2}^{(w-r'+d'-q'+t'')}\eta_{(a,b)}\Bigg\}.$

\vskip3mm Using Lemma 4.1 (b), we see
that\vskip3mm$\sum\limits_{0\leq f\leq
r'-d'}(-1)^{f}\hspace*{-0.15cm}\left[\hspace*{-0.2cm}\begin{array}{c}b-w+j-t+z+f-1 \\
f
\end{array}\hspace*{-0.2cm}\right]\hspace*{-0.15cm}\left[\hspace*{-0.2cm}\begin{array}{c}b+r'-d'-w+z \\
r'-d'-f
\end{array}\hspace*{-0.2cm}\right]\hspace*{-0.15cm}v^{f(r'-d'+t-j)}$\vskip2mm$
=\left[\hspace*{-0.2cm}\begin{array}{c}r'-d'+t-j \\
r'-d'
\end{array}\hspace*{-0.2cm}\right]v^{-(b-w+j-t+z)(r'-d')}.$

\vskip3mm So we have \vskip3mm$C=\sum\limits_{0\leq z\leq
w,h}\hspace*{-0.1cm}(-1)^{z}\left[\hspace*{-0.2cm}\begin{array}{c}b-w+j-t+j+h-k+z-1 \\
z
\end{array}\hspace*{-0.2cm}\right]$
\vskip2mm
$\times\Bigg\{\sum\limits_{\substack{0\leq
p \leq k,v
\\0\leq r' \leq j,~0\leq d' \leq u,r'\\ z\leq q'\leq h,~0\leq
t''\leq
q'-z,u-d'}}\hspace*{-0.1cm}v^{r'(j-r'-t)+p(k-p-s-j+r')+(q'-z)(h-q'-t-k+p+2j-2r')}$
\vskip2mm$\times
v^{-(v-u-2w+b+j-t+2z)d'-(b-w+j-t+z)(r'-d')}\left[\hspace*{-0.1cm}\begin{array}{c}r'+t-j \\
d'
\end{array}\hspace*{-0.1cm}\right]\hspace*{-0.15cm}\left[\hspace*{-0.2cm}\begin{array}{c}r'-d'+t-j \\
r'-d'
\end{array}\hspace*{-0.2cm}\right]
$\vskip2mm$\times\left[\hspace*{-0.1cm}\begin{array}{c}a+p-v+w-r'+d'-z \\
p
\end{array}\hspace*{-0.1cm}\right]\hspace*{-0.15cm}\left[\hspace*{-0.15cm}\begin{array}{c}b+q'-t''-w+r'-d' \\
q'-t''-z
\end{array}\hspace*{-0.15cm}\right]$
\vskip2mm$\times\Bigg\{\hspace*{-0.1cm}\sum\limits_{0\leq e\leq
t''}(-1)^{e}\hspace*{-0.1cm}\left[\hspace*{-0.2cm}\begin{array}{c}v-u-2w+b+2j-t+h-k+z+e-1 \\
e
\end{array}\hspace*{-0.2cm}\right]$\vskip2mm
$\times\left[\hspace*{-0.2cm}\begin{array}{c}b+q'-u-2w+v-p+2r'-d'+z \\
t''-e
\end{array}\hspace*{-0.2cm}\right]v^{e(q'-p+2r'-d'+t+k-h-2j)}\Bigg\}$
\vskip2mm$\times e_{2}^{(h-q')}e_{1}^{(k-p)}e_{2}^{(j-r')}\xi_{(-s,-t)}\hspace*{-0.1cm}\otimes\hspace*{-0.1cm}
f_{2}^{(u-d'-t'')}f_{1}^{(v-p)}f_{2}^{(w-r'+d'-q'+t'')}\eta_{(a,b)}\Bigg\}.$

\vskip3mm Using Lemma 4.1 (b), we see
that\vskip3mm$\sum\limits_{0\leq e\leq
t''}(-1)^{e}\hspace*{-0.1cm}\left[\hspace*{-0.2cm}\begin{array}{c}v-u-2w+b+2j-t+h-k+z+e-1 \\
e
\end{array}\hspace*{-0.2cm}\right]$\vskip2mm
$\times\left[\hspace*{-0.2cm}\begin{array}{c}b+q'-u-2w+v-p+2r'-d'+z \\
t''-e
\end{array}\hspace*{-0.2cm}\right]v^{e(q'-p+2r'-d'+t+k-h-2j)}$\vskip2mm$
=\left[\hspace*{-0.2cm}\begin{array}{c}q'-p+2r'-d'+t+k-h-2j \\
t''
\end{array}\hspace*{-0.2cm}\right]v^{-(v-u-2w+b+2j-t+h-k+z)t''}.$\vskip3mm

So we have\vskip3mm
$C=\sum\limits_{\substack{0\leq
p \leq k,v
\\0\leq r' \leq j,~0\leq d' \leq u,r'\\ 0\leq q'\leq h,~0\leq
t''\leq
q',u-d'}}\hspace*{-0.1cm}v^{r'(j-r'-t)+p(k-p-s-j+r')+q'(h-q'-t-k+p+2j-2r')}
$\vskip2mm$\times
v^{-(v-u-2w+b+j-t)d'-(b-w+j-t)(r'-d')-(v-u-2w+b+2j-t+h-k)t''}$
\vskip2mm$
\times\left[\hspace*{-0.1cm}\begin{array}{c}r'+t-j \\
d'
\end{array}\hspace*{-0.1cm}\right]\hspace*{-0.15cm}\left[\hspace*{-0.2cm}\begin{array}{c}r'-d'+t-j \\
r'-d'
\end{array}\hspace*{-0.2cm}\right]\hspace*{-0.15cm}\left[\hspace*{-0.2cm}\begin{array}{c}q'-p+2r'-d'+t+k-h-2j \\
t''
\end{array}\hspace*{-0.2cm}\right]$\vskip3mm
$\times\Bigg\{\sum\limits_{0\leq z\leq
q'-t''}(-1)^{z}v^{z(q'-p+t+k-h-2j+r'-d'-t'')}\hspace*{-0.05cm}
\left[\hspace*{-0.2cm}\begin{array}{c}b\hspace*{-0.03cm}-\hspace*{-0.03cm}w\hspace*{-0.03cm}+\hspace*{-0.03cm}2j\hspace*{-0.03cm}-\hspace*{-0.03cm}t\hspace*{-0.03cm}+\hspace*{-0.03cm}h\hspace*{-0.03cm}-\hspace*{-0.03cm}k\hspace*{-0.03cm}+\hspace*{-0.03cm}z\hspace*{-0.03cm}-\hspace*{-0.03cm}1 \\
z
\end{array}\hspace*{-0.2cm}\right]$\vskip3mm$\times\left[\hspace*{-0.1cm}\begin{array}{c}a+p-v+w-r'+d'-z \\
p
\end{array}\hspace*{-0.1cm}\right]\hspace*{-0.15cm}\left[\hspace*{-0.15cm}\begin{array}{c}b+q'-t''-w+r'-d' \\
q'-t''-z
\end{array}\hspace*{-0.15cm}\right]\Bigg\}$

\vskip2mm$\times
e_{2}^{(h-q')}e_{1}^{(k-p)}e_{2}^{(j-r')}\xi_{(-s,-t)}\hspace*{-0.1cm}\otimes\hspace*{-0.1cm}
f_{2}^{(u-d'-t'')}f_{1}^{(v-p)}f_{2}^{(w-r'+d'-q'+t'')}\eta_{(a,b)}.$

\vskip3mm Using Lemma 4.1 (c) (replacing $f, r, b, u, a, c$ by $z,
p, a-v+w-r'+d', q'-t'', b-w+r'-d', r'-d'+t-2j+k-h$ respectively), we
see that\vskip3mm$\sum\limits_{0\leq z\leq
q'-t''}(-1)^{z}v^{z(q'-p+t+k-h-2j+r'-d'-t'')}\hspace*{-0.05cm}
\left[\hspace*{-0.2cm}\begin{array}{c}b\hspace*{-0.03cm}-\hspace*{-0.03cm}w\hspace*{-0.03cm}+\hspace*{-0.03cm}2j\hspace*{-0.03cm}-\hspace*{-0.03cm}t\hspace*{-0.03cm}+\hspace*{-0.03cm}h\hspace*{-0.03cm}-\hspace*{-0.03cm}k\hspace*{-0.03cm}+\hspace*{-0.03cm}z\hspace*{-0.03cm}-\hspace*{-0.03cm}1 \\
z
\end{array}\hspace*{-0.2cm}\right]$\vskip3mm$\times\left[\hspace*{-0.1cm}\begin{array}{c}a+p-v+w-r'+d'-z \\
p
\end{array}\hspace*{-0.1cm}\right]\hspace*{-0.15cm}\left[\hspace*{-0.15cm}\begin{array}{c}b+q'-t''-w+r'-d' \\
q'-t''-z
\end{array}\hspace*{-0.15cm}\right]$\vskip3mm$=\sum\limits_{0\leq \delta\leq q'-t'',p}
v^{\delta(a-v+w-r'+d')+(q'-t''-\delta)(w-b-p+t-2j+k-h)}$\vskip2mm$\times
\left[\hspace*{-0.15cm}\begin{array}{c}a-v+w-r'+d'+p-q'+t'' \\
p-\delta
\end{array}\hspace*{-0.15cm}\right]\hspace*{-0.15cm}\left[\hspace*{-0.15cm}\begin{array}{c}b-w+r'-d'+q'-t'' \\
\delta
\end{array}\hspace*{-0.15cm}\right]$\vskip2mm$\times
\left[\hspace*{-0.15cm}\begin{array}{c}r'-d'+t-2j+k-h+q'-t''-\delta \\
q'-t''-\delta
\end{array}\hspace*{-0.15cm}\right].$

\vskip3mm So we have
\vskip3mm$
C=\sum\limits_{\substack{0\leq p \leq
k,v\\0\leq \delta\leq q'-t'',p
\\0\leq r' \leq j,~0\leq d' \leq u,r'\\ 0\leq q'\leq h,~0\leq
t''\leq
q',u-d'}}\hspace*{-0.1cm}v^{r'(j-r'-t)+p(k-p-s-j+r')+q'(h-q'-t-k+p+2j-2r')}$
\vskip2mm$\times v^{-(v-u-2w+b+j-t)d'-(b-w+j-t)(r'-d')-(v-u-2w+b+2j-t+h-k)t''}
$\vskip2mm$\times
v^{\delta(a-v+w-r'+d')+(q'-t''-\delta)(w-b-p+t-2j+k-h)}$
\vskip2mm$\times
\left[\hspace*{-0.1cm}\begin{array}{c}r'+t-j \\
d'
\end{array}\hspace*{-0.1cm}\right]\hspace*{-0.15cm}\left[\hspace*{-0.2cm}\begin{array}{c}r'-d'+t-j \\
r'-d'
\end{array}\hspace*{-0.2cm}\right]\hspace*{-0.15cm}\left[\hspace*{-0.2cm}\begin{array}{c}q'-p+2r'-d'+t+k-h-2j \\
t''
\end{array}\hspace*{-0.2cm}\right]$
\vskip2mm$\times
\left[\hspace*{-0.15cm}\begin{array}{c}a-v+w-r'+d'+p-q'+t'' \\
p-\delta
\end{array}\hspace*{-0.15cm}\right]\hspace*{-0.15cm}\left[\hspace*{-0.15cm}\begin{array}{c}b-w+r'-d'+q'-t'' \\
\delta
\end{array}\hspace*{-0.15cm}\right]$\vskip2mm$\times
\left[\hspace*{-0.15cm}\begin{array}{c}r'-d'+t-2j+k-h+q'-t''-\delta \\
q'-t''-\delta
\end{array}\hspace*{-0.15cm}\right]$\vskip2mm$\times
e_{2}^{(h-q')}e_{1}^{(k-p)}e_{2}^{(j-r')}\xi_{(-s,-t)}\hspace*{-0.1cm}\otimes\hspace*{-0.1cm}
f_{2}^{(u-d'-t'')}f_{1}^{(v-p)}f_{2}^{(w-r'+d'-q'+t'')}\eta_{(a,b)}.$

\vskip3mm Let $\mathbb{C}$ denote the degree of the coefficient in
the above expression $C,$ then we have \vskip2mm
$\mathbb{C}=d'(r'-d'+t-j)+(r'-d')(t-j)+t''(q'-p+2r'-d'+t+k-h-2j-t'')
-(v-u-2w+b+j-t)d'-(b-w+j-t)(r'-d')-(v-u-2w+b+2j-t+h-k)t''+
r'(j-r'-t)+p(k-p-s-j+r')+q'(h-q'-t-k+p+2j-2r')+(p-\delta)(a-v+w-r'+d'-q'+t''+\delta)+
(q'-t''-\delta)(r'-d'+t-2j+k-h)+\delta(b-w+r'-d'+q'-t''-\delta)+
\delta(a-v+w-r'+d')+(q'-t''-\delta)(w-b-p+t-2j+k-h).$

\vskip3mm Since $w\leq b+j+(j+h-k)-t, ~~v\geq a+b+j+h-s-t,~~k\geq
h+j,~~ v\geq u+w$ and also $v\geq a+w+k-s-j, ~~u+2w\leq
b+v+j-t+j+h-k\leq b+v+j-t,$ in this case we get
\vskip3mm
$\mathbb{C}\leq
-p^2+p(a-v+w+k-s-j)+pd'+(\delta-p)(q'-t''-\delta)-r'^2+d'(r'-d')+q'(-q'+p-r'-d')
+t''(q'-p+r'-t'')+\delta(b-w-t-k+2j+h)+(\delta-p)(q'-t''-\delta)\leq
p(a-v+w+k-s-j)+\delta(b-w-t-k+2j+h)\leq (\delta-p)(b-w-t-k+2j+h)\leq
0$ and $\mathbb{C}=0$ if and only if $p=r'=d'=q'=t''=\delta=0.$\vskip3mm
Meanwhile this element $C$ is fixed by the involution $\Psi$ of
$V(-s\omega_{1}-t\omega_{2})\otimes V(a\omega_{1}+b\omega_{2})$,
since the element listed in (6) is fixed by
$-:\dot{\mathbf{U}}\rightarrow \dot{\mathbf{U}}$. By using the
definitions, we can see that this element $C$ is
$(\theta_{2}^{(h)}\theta_{1}^{(k)}\theta_{2}^{(j)}\lozenge
\theta_{2}^{(u)}\theta_{1}^{(v)}\theta_{2}^{(w)})_{(s,t),(a,b)},$
$\mbox{if}~w\leq b+j+(j+h-k)-t, ~~v\geq a+b+j+h-s-t, ~~k\geq
h+j,~~ v\geq u+w.$ Hence the
element listed in (6) is an element of $\dot{\mathbf{B}}.$

\vskip3mm Similarly we can deal with (7) by using the computations in the proof of (1), repeatedly using Lemma 4.1 (b) and using Lemma 4.1 (c) once.

\vskip3mm $(8)$ For the element
$e_{2}^{(h)}e_{1}^{(k)}e_{2}^{(j)}1_{(l,m)}f_{1}^{(u)}f_{2}^{(v)}f_{1}^{(w)},$
its image under the map
$\dot{\mathbf{U}}\rightarrow V(-s\omega_{1}-t\omega_{2})\otimes
V(a\omega_{1}+b\omega_{2}),$ which is given by $u\mapsto
u(\xi_{(-s,-t)}\otimes\eta_{(a,b)}),$ is zero unless $l+2(u+w)-v=a-s$ and $m+2v-(u+w)=b-t,$ in
which case we get\vskip3mm

$e_{2}^{(h)}e_{1}^{(k)}e_{2}^{(j)}1_{(l,m)}f_{1}^{(u)}f_{2}^{(v)}f_{1}^{(w)}(\xi_{(-s,-t)}\otimes\eta_{(a,b)})$\vskip2mm$
=\sum\limits_{\substack{0\leq f \leq j\\0\leq p \leq k\\0\leq q \leq
p,u\\0\leq r \leq h}}v^{f(j-f-t)+p(k-p-s-j+f)+r(h-r-t-k+p+2j-2f)}
\hspace*{-0.1cm}\left[\hspace*{-0.15cm}\begin{array}{c}b+f-v+w \\
f
\end{array}\hspace*{-0.15cm}\right]$\vskip2mm$
\times\left[\hspace*{-0.15cm}\begin{array}{c}a+p-q-w \\
p-q
\end{array}\hspace*{-0.15cm}\right]\hspace*{-0.15cm}
\left[\hspace*{-0.15cm}\begin{array}{c}a+p-u-2w+v-f\\
q
\end{array}\hspace*{-0.15cm}\right]\hspace*{-0.15cm}\left[\hspace*{-0.15cm}\begin{array}{c}b+r-v+f+w-p+q \\
r
\end{array}\hspace*{-0.15cm}\right]$\vskip2mm$\times e_{2}^{(h-r)}e_{1}^{(k-p)}e_{2}^{(j-f)}\xi_{(-s,-t)}\otimes
f_{1}^{(u-q)}f_{2}^{(v-f-r)}f_{1}^{(w-p+q)}\eta_{(a,b)}.$\vskip3mm

Let $D$ denote the degree of the coefficient in the right-hand expression of the above identity, then we get \vskip2mm
$D=f(j-f-t)+p(k-p-s-j+f)+r(h-r-t-k+p+2j-2f)+f(b-v+w)+q(a+p-u-2w+v-f-q)+(p-q)(a-w)+r(b-v+f+w-p+q).$\vskip2mm

If $-l \geq u+k-j,~~-m\geq v-u+j,~~k\geq h+j,~~v\geq u+w$, then we
get $v\geq b+w+j-t,~~u+2w\geq a+v+k-s-j,~~k\geq h+j,~~v\geq u+w.$\vskip2mm

In this case we have $D\leq
-f^2+f(b-v+w+j-t)+p(u+w-v-p+f)+q(p+v-u-w-f-q)+r(b-v+w+j-t-r-f+q)\leq
p(-p+f)+q(p-f)-q^2-f^2-r^2-rf+qr\leq
$\vskip2mm$\begin{cases}p(q-p)+(r+f)(p-f)-qf-q^2-r^2\leq 0 & \hbox{if } p\leq f;         \\
(q-p)(p-f)-f^2-rf-(q-r)^2-qr \leq 0& \hbox{if } p > f.
\end{cases}$\vskip3mm

So we get $D\leq 0$  and  $D=0$ if and only if $p=r=f=q=0.$
Meanwhile the above expression is fixed by the involution $\Psi$ of
$V(-s\omega_{1}-t\omega_{2})\otimes V(a\omega_{1}+b\omega_{2})$,
since the element
$e_{2}^{(h)}e_{1}^{(k)}e_{2}^{(j)}1_{(l,m)}f_{1}^{(u)}f_{2}^{(v)}f_{1}^{(w)}$
is fixed by $-:\dot{\mathbf{U}}\rightarrow \dot{\mathbf{U}}$. By using
the definitions, we can see that the above expression is
$(\theta_{2}^{(h)}\theta_{1}^{(k)}\theta_{2}^{(j)}\lozenge
\theta_{1}^{(u)}\theta_{2}^{(v)}\theta_{1}^{(w)})_{(s,t),(a,b)},$ if
$v\geq b+w+j-t,~~u+2w\geq a+v+k-s-j,~~k\geq h+j,~~v\geq u+w.$ Hence the element listed in
$(8)$ is an element of $\dot{\mathbf{B}}.$
\vskip3mm
In analogy to the proof of (2), we can deal with $(9)$ and $(10)$ by using the computations in the proof of (8) and using Lemma 4.1 (b).

In analogy to the proof of $(3)$-$(5)$, we can deal with $(11)$ by using the computations in the proof of (8) and repeatedly using Lemma 4.1 (b).

In analogy to the proof of $(6)$ and $(7)$, we can deal with $(12)$ and $(13)$ by using the computations in the proof of (8),
repeatedly using Lemma 4.1 (b) and using Lemma 4.1 (c) once.
\vskip3mm
By computing the image of all these elements listed in Theorem 3.1
under the map $\dot{\mathbf{U}}\rightarrow
V(-s\omega_{1}-t\omega_{2})\otimes V(a\omega_{1}+b\omega_{2}),$ $\forall s, t, a, b\in \mathbb{N},$
which is given by $u\mapsto u(\xi_{(-s,-t)}\otimes\eta_{(a,b)}),$ we can get the corresponding elements of the canonical basis of $V(-s\omega_{1}-t\omega_{2})\otimes V(a\omega_{1}+b\omega_{2}).$

For example, as can be seen above, the element corresponding to the element listed in (1) is $(\theta_{2}^{(h)}\theta_{1}^{(k)}\theta_{2}^{(j)}\lozenge
\theta_{2}^{(u)}\theta_{1}^{(v)}\theta_{2}^{(w)})_{(s,t),(a,b)},$
$\mbox{if}~v\geq a+w+k-s-j, ~~u+2w\geq b+v+j-t, ~~k\geq h+j,~~v\geq u+w;$ the element corresponding to the element listed in (2) is $(\theta_{2}^{(h)}\theta_{1}^{(k)}\theta_{2}^{(j)}\lozenge
\theta_{2}^{(u)}\theta_{1}^{(v)}\theta_{2}^{(w)})_{(s,t),(a,b)},$
$\mbox{if}~v\geq a+w+k-s-j, ~~b+v+j-t+(j+h-k)\leq u+2w\leq b+v+j-t, ~~w\geq
b+j-t, ~~k\geq h+j,~~v\geq u+w;$ the element corresponding to the element listed in (6) is $(\theta_{2}^{(h)}\theta_{1}^{(k)}\theta_{2}^{(j)}\lozenge
\theta_{2}^{(u)}\theta_{1}^{(v)}\theta_{2}^{(w)})_{(s,t),(a,b)},$
$\mbox{if}~v\geq a+b+j+h-s-t, ~~w\leq b+j-t+(j+h-k), ~~k\geq h+j,~~v\geq u+w.$

For example, the element corresponding to the element listed in the set $(1')$ is
$(\theta_{2}^{(h)}\theta_{1}^{(k)}\theta_{2}^{(j)}\lozenge
\theta_{2}^{(u)}\theta_{1}^{(v)}\theta_{2}^{(w)})_{(s,t),(a,b)},$
$\mbox{if}~v\leq a+w+k-s-j, ~~w\geq b+j+(j+h-k)-t, ~~k\geq h+j,~~v\geq u+w;$ the element corresponding to the element listed in $(2')$ is
$(\theta_{2}^{(h)}\theta_{1}^{(k)}\theta_{2}^{(j)}\lozenge
\theta_{2}^{(u)}\theta_{1}^{(v)}\theta_{2}^{(w)})_{(s,t),(a,b)},$
$\mbox{if}~v\leq a+w+k-s-j, ~~u+2w\leq b+v+j-t+(j+h-k),~~ b+j-t+(j+h-k)\leq w\leq
b+j-t, ~~k\geq h+j,~~v\geq u+w;$ the element corresponding to the element listed in $(6')$ is
$(\theta_{2}^{(h)}\theta_{1}^{(k)}\theta_{2}^{(j)}\lozenge
\theta_{2}^{(u)}\theta_{1}^{(v)}\theta_{2}^{(w)})_{(s,t),(a,b)},$
$\mbox{if}~u+w\leq a+b+k-s-t, ~~u+2w\geq b+v+j-t, ~~k\geq h+j,~~v\geq u+w.$

All the other cases can be considered similarly. After a careful analysis, we can conclude that we exactly get all the elements of the canonical basis of
$V(-s\omega_{1}-t\omega_{2})\otimes V(a\omega_{1}+b\omega_{2}).$ By definition, these elements listed in Theorem 3.1 are complete.

Now we have completed the proof of Theorem 3.1.
\end{proof}

\vskip3mm Acknowledgements. The author would like to thank Professor
G. Lusztig both for posing this problem that led to this paper, and
for valuable guide during his stay in Beijing. The author would also
like to thank Professor Nanhua Xi for very helpful
comments.



\end{document}